\apptocmd{\thebibliography}{\raggedright}{}{}
\newcommand*\openquote{\makebox(25,-10){\scalebox{3}{``}}}
\newcommand*\closequote{\makebox(25,-10){\scalebox{3}{''}}}
\colorlet{shadecolor}{White}
\newif\if@right
\def\shadequote{\@righttrue\shadequote@i}
\def\shadequote@i{\begin{snugshade}\begin{quote}\openquote}
\def\endshadequote{%
  \if@right\hfill\fi\closequote\end{quote}\end{snugshade}}
\author{Liran Shaul}
\address{Department of Algebra, Faculty of Mathematics and Physics, Charles University in Prague, Sokolovsk\'a 83, 186 75 Praha, Czech Republic}
\email{shaul@karlin.mff.cuni.cz}
\dedicatory{Dedicated to Amnon Yekutieli on the occasion of his 60th birthday}
\newtheorem{thm}[equation]{Theorem}
\newtheorem{cthm}{Theorem}
\newtheorem{cor}[equation]{Corollary}
\newtheorem{prop}[equation]{Proposition}
\theoremstyle{definition}
\newtheorem{rem}[equation]{Remark}
\newcommand{\opn}{\operatorname}
\newcommand{\cat}[1]{\operatorname{\mathsf{#1}}}
\newcommand{\mfrak}[1]{\mathfrak{#1}}
\newcommand{\mrm}[1]{\mathrm{#1}}
\newcommand{\mbb}[1]{\mathbb{#1}}
\newcommand{\K}{\mbb{K} \hspace{0.05em}}
\newcommand{\m}{\mfrak{m}}
\newcommand{\p}{\mfrak{p}}
\newcommand{\q}{\mfrak{q}}
\newcommand{\injdim}{\operatorname{inj\,dim}}
\newcommand{\amp}{\operatorname{amp}}
\begin{document}

\title{Open loci results for commutative DG-rings}

\begin{abstract}
Given a commutative noetherian non-positive DG-ring with bounded cohomology which has a dualizing DG-module,
we study its regular, Gorenstein and Cohen-Macaulay loci.
We give a sufficient condition for the regular locus to be open,
and show that the Gorenstein locus is always open. 
However, both of these loci are often empty:
we show that no matter how nice $\mrm{H}^0(A)$ is, 
there are examples where the Gorenstein locus of $A$ is empty.
We then show that the Cohen-Macaulay locus of a commutative noetherian DG-ring
with bounded cohomology which has a dualizing DG-module always contains a dense open set.
Our results imply that under mild hypothesis,
eventually coconnective locally noetherian derived schemes are generically Cohen-Macaulay,
but even in very nice cases, they need not be generically Gorenstein.
\end{abstract}

\thanks{{\em Mathematics Subject Classification} 2010:
13H10, 14M05, 16E45}

\setcounter{tocdepth}{1}
\setcounter{section}{-1}

\maketitle

\section{Introduction}

The aim of this paper is to answer the question: how does an eventually coconnective derived scheme look like generically?
In classical algebraic geometry, it is well known that any algebraic variety is generically regular,
so that a typical point on it is the spectrum of a regular local ring.
For more complicated schemes, this need not be the case,
and the regular locus of a noetherian scheme could be empty.
For this reason, the questions of what are the Gorenstein and Cohen-Macaulay loci of a noetherian scheme
are important questions, first considered by Grothendieck. The next quote by To\"{e}n is taken from \cite{To}:

\begin{shadequote}
Derived algebraic geometry is an extension of algebraic geometry whose main purpose is
to propose a setting to treat geometrically special situations (typically bad intersections,
quotients by bad actions,... ), as opposed to generic situations (transversal intersections,
quotients by free and proper actions,... ).
\end{shadequote}

In view of this, it should not be surprising that eventually coconnective derived schemes often behave badly from a homological point of view,
and are often singular everywhere, and even nowhere Gorenstein. 
The main result of this paper shows, however, that even in these special geometric situations where derived schemes are needed,
under very mild hypothesis, eventually coconnective derived schemes are generically Cohen-Macaulay.
Since questions about loci are local, 
we may assume that our derived schemes are affine,
so they are represented by simplicial commutative rings.
Normalizing them using the monoidal Dold-Kan correspondence, one obtains commutative non-positive (in cohomological grading)
differential graded rings. 
These have the additional property that they have divided powers structure.
As this structure will not be needed for the results of this paper,
we will work more generally with arbitrary commutative non-positive DG-rings.
The eventually coconnective assumption is equivalent to the fact that these commutative DG-rings have bounded cohomology.
We impose this assumption, as only in this case the Gorenstein and Cohen-Macaulay properties are understood.
The main result of this paper states:

\begin{cthm}
\leavevmode
\begin{enumerate}[wide, labelwidth=!, labelindent=0pt]
\item Let $A$ be a commutative noetherian DG-ring with bounded cohomology which has a dualizing DG-module.
Then the Gorenstein locus of $A$ is an open subset of $\opn{Spec}(\mrm{H}^0(A))$.
\item For any commutative noetherian ring $B$, there is a commutative noetherian DG-ring $A$ with bounded cohomology such that $\mrm{H}^0(A) = B$, and such that the Gorenstein locus of $A$ is empty.
It holds that $A$ has a dualizing DG-module if and only if $B$ has a dualizing complex.
\item Let $A$ be a commutative noetherian DG-ring with bounded cohomology which has a dualizing DG-module.
Then the Cohen-Macaulay locus of $A$ is a subset of $\opn{Spec}(\mrm{H}^0(A))$ which contains a dense open subset.
\end{enumerate}
\end{cthm}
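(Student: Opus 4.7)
The proof naturally splits along the three items, and I would attack them in the stated order, since parts (1) and (3) share the strategy of identifying a local Gorenstein / Cohen-Macaulay condition in terms of the dualizing DG-module and then exploiting semicontinuity.

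\textbf{Part (1).} I would first recall the expected characterization: for a local commutative noetherian DG-ring $(A,\bar{\m})$ with bounded cohomology and dualizing DG-module $R$, the DG-ring $A$ is Gorenstein if and only if $R \simeq A[n]$ in $\cat{D}(A)$ for some integer $n$. Globally, this means the Gorenstein locus is the union, over finitely many integers $n$ (finitely many because $R$ has bounded cohomology), of the sets $U_n := \{\bar{\p} : R_{\bar{\p}}[-n] \simeq A_{\bar{\p}}\}$. For each $n$, membership in $U_n$ is controlled by two classical openness phenomena on $\opn{Spec}(\mrm{H}^0(A))$: the vanishing of finitely generated cohomology modules $\mrm{H}^i(R)$ (for $i \neq n$) after localization, and the local freeness of rank one of $\mrm{H}^n(R)$ over $\mrm{H}^0(A)$. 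Both are open conditions, so their intersection is open and thus the Gorenstein locus is open.

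\textbf{Part (2).} The plan is constructive. Given $B$, I would form a trivial extension $A := B \ltimes M[-n]$ for a suitably chosen finitely generated $B$-module $M$ and integer $n \geq 1$; this yields a commutative non-positive noetherian DG-ring with $\mrm{H}^0(A) = B$, bounded cohomology, and a built-in mismatch between $\amp(A_{\bar{\p}})$ and the amplitude of any shift of $A_{\bar{\p}}$ that a dualizing DG-module could realize, forcing the Gorenstein locus of $A$ to be empty at every $\bar{\p} \in \opn{Spec}(B)$. For the equivalence of dualizing structures, I would transfer duality along the canonical surjection $A \to \mrm{H}^0(A) = B$: pushing a dualizing complex of $B$ forward produces a dualizing DG-module over $A$, and restricting a dualizing DG-module along this map produces a dualizing complex of $B$. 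The main work is in choosing $M$ and $n$ carefully enough that both the non-Gorenstein behaviour and the noetherianity of $A$ hold uniformly.

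\textbf{Part (3).} Let $\bar{\p}_1,\ldots,\bar{\p}_k$ be the minimal primes of $\mrm{H}^0(A)$. I would first show each $A_{\bar{\p}_i}$ is Cohen-Macaulay: since $\mrm{H}^0(A_{\bar{\p}_i})$ is artinian, the dualizing DG-module $R_{\bar{\p}_i}$ can be shown to have amplitude equal to $\amp(A_{\bar{\p}_i})$, which is the DG analogue of the CM condition. Next, the function $\bar{\p} \mapsto \amp(R_{\bar{\p}}) - \amp(A_{\bar{\p}})$ is upper-semicontinuous thanks to the finite generation of all $\mrm{H}^i(A)$ and $\mrm{H}^i(R)$ as $\mrm{H}^0(A)$-modules, so it vanishes on an open neighborhood of each $\bar{\p}_i$. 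The union of these neighborhoods is an open set containing every minimal prime of $\mrm{H}^0(A)$, hence is dense, and by construction is contained in the CM locus.

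\textbf{Main obstacle.} The hardest step is the base case in part (3): verifying that $A_{\bar{\p}_i}$ is Cohen-Macaulay at a minimal prime, when $\mrm{H}^0(A_{\bar{\p}_i})$ is artinian but higher cohomologies of $A_{\bar{\p}_i}$ need not vanish. The classical intuition (artinian local $\Rightarrow$ CM) must be replaced by an amplitude comparison for the dualizing DG-module, requiring control over how $R$ interacts with the nontrivial DG-structure in negative degrees. A secondary difficulty is in part (2): arranging the trivial extension so that emptiness of the Gorenstein locus is uniform over all $\bar{\p} \in \opn{Spec}(B)$, independently of $B$, without destroying either noetherianity or the existence of a dualizing DG-module.
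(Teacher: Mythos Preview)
Your plan for part (1) has a genuine gap: you characterize $U_n = \{\bar{\p} : R_{\bar{\p}} \simeq A_{\bar{\p}}[n]\}$ by the conditions that $\mrm{H}^i(R)_{\bar{\p}} = 0$ for $i \ne n$ and that $\mrm{H}^n(R)_{\bar{\p}}$ is free of rank one over $\mrm{H}^0(A)_{\bar{\p}}$. But $A$ is a DG-ring with $\amp(A) > 0$ in general, so $A[n]$ does \emph{not} have cohomology concentrated in a single degree; the condition $R_{\bar{\p}} \simeq A_{\bar{\p}}[n]$ says $\mrm{H}^i(R_{\bar{\p}}) \cong \mrm{H}^{i+n}(A_{\bar{\p}})$ for all $i$, which is already a nontrivial match, and even that is not sufficient, since two objects of $\cat{D}(A_{\bar{\p}})$ with isomorphic cohomology need not be isomorphic. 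The paper handles this by lifting a local isomorphism $\alpha : A_{\bar{\p}} \to R_{\bar{\p}}$ to a global map $\beta : A \to R$ (up to multiplication by a unit of $\mrm{H}^0(A_{\bar{\p}})$), and then uses the cone $K$ of $\beta$: the complement of $\opn{Supp}(\mrm{H}^*(K))$ is the required open neighbourhood of $\bar{\p}$ inside $\opn{Gor}(A)$. This cone argument is precisely what replaces your cohomology-vanishing test.

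In part (3), the function $\bar{\p} \mapsto \amp(R_{\bar{\p}}) - \amp(A_{\bar{\p}})$ is a difference of two upper semicontinuous functions, and such a difference need not be upper semicontinuous; your openness claim is therefore unjustified. The paper avoids this by first passing to a dense open set that is a disjoint union of pieces $D(f_i)$ with irreducible spectrum. Over each such piece there is a unique minimal prime $\bar{\p}$, and one has $\inf(R_{\bar{\q}}) = \inf(R)$ for every $\bar{\q}$ (using that $\opn{Supp}(\mrm{H}^{\inf(R)}(R)) = \opn{Spec}(\mrm{H}^0(A))$ in the irreducible case), while $\amp(A_{\bar{\q}}) \ge \amp(A_{\bar{\p}})$ because $\bar{\p} \subseteq \bar{\q}$. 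With these one-sided controls the desired open neighbourhood comes from cutting out $\bigcup_{i > \sup(R_{\bar{\p}})} \opn{Supp}(\mrm{H}^i(R))$. Note also that the statement asks that $A_{\bar{\q}}$ be Cohen-Macaulay (i.e.\ local-Cohen-Macaulay at every further localization), not merely local-Cohen-Macaulay; the paper's irreducibility reduction is again what allows this stronger conclusion, via a global amplitude criterion. Finally, what you flag as the main obstacle---that $A_{\bar{\p}}$ is local-Cohen-Macaulay when $\dim(\mrm{H}^0(A_{\bar{\p}})) = 0$---is in fact a known input (\cite[Proposition~4.8]{ShCM}), not the crux of the argument.

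For part (2) your trivial-extension idea is on the right track and is essentially what the paper does (it takes $A$ with $A^0 = B$, $A^{-2} = B \oplus B$, and zero differential), but the obstruction is not an ``amplitude mismatch'': the paper reduces to the artinian case via an $A$-regular sequence and then shows, using Matlis duality, that Gorenstein would force $B_{\bar{\p}}/(x_1,\dots,x_d) \oplus B_{\bar{\p}}/(x_1,\dots,x_d)$ to be isomorphic to an indecomposable injective, a rank contradiction rather than an amplitude one.
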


\section{Preliminaries}

As mentioned above, we work with commutative non-positive differential graded rings.
A complete reference for DG-rings is the book \cite{YeBook}.
Commutative DG-rings, by definition, are graded rings $A = \oplus_{n=-\infty}^0 A^n$,
with a differential of degree $+1$ which satisfies a graded Leibniz rule with respect to multiplication.
The commutativity assumption says that $b\cdot a = (-1)^{i\cdot j}\cdot a \cdot b$ for $a \in A^i$ and $b \in A^j$,
and moreover $a\cdot a = 0$ if $a \in A^i$ with $i$ being odd. 
It follows that $\mrm{H}^0(A)$ is a commutative ring.
Informally, one should think of the geometric space $A$ as being $\opn{Spec}(\mrm{H}^0(A))$,
with some additional homotopical structure. 
In particular, 
the various loci studied in this paper will be subsets of $\opn{Spec}(\mrm{H}^0(A))$ with respect to the Zariski topology.
We say that $A$ has bounded cohomology if there exists 
$m \in \mathbb{Z}$ such that for all 
$n<m$ one has that $\mrm{H}^n(A) = 0$.
In the geometric setting, the terminology for bounded cohomology is eventual coconnectedness.

Associated to a commutative DG-ring $A$ is the derived category of DG-modules over $A$, denoted by $\cat{D}(A)$.
For any $M \in \cat{D}(A)$, and any $n \in \mathbb{Z}$,
we have that $\mrm{H}^n(M)$ is an $\mrm{H}^0(A)$-module.
Given $M \in \cat{D}(A)$, we define the the (cohomological) infimum and supremum of $M$ by
\[
\inf(M) = \inf\{n \in \mathbb{Z} \mid \mrm{H}^n(M) \ne 0\},
\quad
\sup(M) = \sup\{n \in \mathbb{Z} \mid \mrm{H}^n(M) \ne 0\}.
\]
When both of these are finite, 
we define the (cohomological) amplitude of $M$ to be 
\[
\amp(M) = \sup(M) - \inf(M).
\]
In that case, we say that $M$ has bounded cohomology, and write $\amp(M) < \infty$.
We denote by $\cat{D}^{\mrm{b}}(A)$ the full subcategory of $\cat{D}(A)$ consisting of DG-modules with bounded cohomology.
It is a full triangulated subcategory of $\cat{D}(A)$.

We say that $A$ is noetherian if the ring $\mrm{H}^0(A)$ is a noetherian ring,
and for all $i<0$, the $\mrm{H}^0(A)$-module $\mrm{H}^i(A)$ is finitely generated.
This definition is justified by the derived Bass-Papp theorem, see \cite[Theorem 6.6]{ShInj}.
If $A$ is noetherian, and $M \in \cat{D}(A)$,
we say that $M$ has finitely generated cohomology if for all $n \in \mathbb{Z}$,
the $\mrm{H}^0(A)$-module $\mrm{H}^n(M)$ is finitely generated.
The full subcategory of $\cat{D}(A)$ consisting of DG-modules with finitely generated cohomology is denoted by $\mrm{D}_{\mrm{f}}(A)$. 
It is a triangulated subcategory of $\cat{D}(A)$.
The intersection $\mrm{D}_{\mrm{f}}(A) \cap \mrm{D}^{\mrm{b}}(A)$ is a triangulated subcategory denoted by $\mrm{D}^{\mrm{b}}_{\mrm{f}}(A)$.

A commutative noetherian DG-ring $A$ is called local if the commutative noetherian ring $\mrm{H}^0(A)$ is a noetherian local ring.
In that case, if $\bar{\m}$ is the maximal ideal of $\mrm{H}^0(A)$, 
we will say that $(A,\bar{\m})$ is a noetherian local DG-ring.
Given a commutative DG-ring $A$, 
and given $\bar{\p} \in \opn{Spec}(\mrm{H}^0(A))$,
denoting by $\pi:A^0 \to \mrm{H}^0(A)$ the natural surjection,
and letting $\p := \pi^{-1}(\bar{\p})$,
one sets $A_{\bar{\p}} := A \otimes_{A^0} A^0_{\p}$
This is a commutative DG-ring, called the localization of $A$ at $\bar{\p}$.
Similarly, if $\bar{S}$ is any multiplicatively closed set in $\mrm{H}^0(A)$,
letting $S = \pi^{-1}(\bar{S})$, 
we let $\bar{S}^{-1}A := A \otimes_{A^0} S^{-1}A^0$.
There is a natural localization functor $\cat{D}(A) \to \cat{D}(A_{\bar{\p}})$
given by $-\otimes_{A^0} A^0_{\p} \cong -\otimes^{\mrm{L}}_A A_{\bar{\p}}$. 
It is a triangulated functor.
Given $M \in \cat{D}(A)$, we denote its image in $\cat{D}(A_{\bar{\p}})$ by $M_{\bar{\p}}$.
The localization functor has the property that for any $n$ there is a natural isomorphism
$\mrm{H}^n(M_{\bar{\p}}) \cong \mrm{H}^n(M)_{\bar{\p}}$,
where the right hand side is the usual localization of the $\mrm{H}^0(A)$-module $\mrm{H}^n(M)$.
If $A$ is noetherian then $A_{\bar{\p}}$ is noetherian and local.

If $A$ is a commutative noetherian local DG-ring,
according to \cite[Theorem 0.1]{ShInj},
there is a full subcategory $\opn{Inj}(A) \subseteq \cat{D}(A)$,
which is the analogue of the category of injective modules over a ring.
In particular, the Matlis classification of injectives holds in this setting,
and (isomorphism classes) of elements of $\opn{Inj}(A)$ are in bijection with $\opn{Spec}(\mrm{H}^0(A))$.
We denote by $E(A,\bar{\p})$ the element of $\opn{Inj}(A)$ which corresponds to a given $\bar{\p} \in \opn{Spec}(\mrm{H}^0(A))$.
It is determined up to isomorphism by the properties that $\inf(E(A,\bar{\p})) > -\infty$, 
and 
\[
\mrm{R}\opn{Hom}_A(\mrm{H}^0(A),E(A,\bar{\p})) \cong E(\mrm{H}^0(A),\bar{\p}) \cong \mrm{H}^0\left(E(A,\bar{\p})\right),
\]
where $E(\mrm{H}^0(A),\bar{\p})$ is the usual injective hull of the $\mrm{H}^0(A)$-module $\mrm{H}^0(A)/\bar{\p}$.
See \cite{ShInj} for more details about the category $\opn{Inj}(A)$.

Given a commutative noetherian DG-ring $A$,
a DG-module $R \in \cat{D}_{\mrm{f}}(A)$ is called a dualizing DG-module
if it has finite injective dimension (in the sense of \cite[Definition 12.4.8]{YeBook}),
and the natural map $A \to \mrm{R}\opn{Hom}_A(R,R)$ is an isomorphism in $\cat{D}(A)$.
By \cite[Corollary 7.3]{Ye1}, 
if $\amp(A) < \infty$ then $\amp(R) < \infty$.
For any $M \in \cat{D}^{\mrm{b}}_{\mrm{f}}(A)$,
the natural map
\[
M \to \mrm{R}\opn{Hom}_A(\mrm{R}\opn{Hom}_A(M,R),R)
\]
is an isomorphism in $\cat{D}(A)$.
Given a commutative noetherian DG-ring $A$,
it follows from the proof of \cite[Theorem 7.9]{Ye1} that if there exist a Gorenstein noetherian ring $\K$ of finite Krull dimension, and a map of DG-rings $\K \to A$,
such that the induced map $\K \to \mrm{H}^0(A)$ is essentially of finite type,
then $A$ has a dualizing DG-module.
It follows that virtually all DG-rings that naturally arise in derived algebraic geometry have dualizing DG-modules.

A commutative noetherian local DG-ring $A$ is called Gorenstein if $A$ is a dualizing DG-module over itself. 
This implies that $A$ has bounded cohomology. 
A not necessarily local commutative noetherian DG-ring $A$ is called Gorenstein if for any $\bar{\p} \in \opn{Spec}(\mrm{H}^0(A))$,
the local DG-ring $A_{\bar{\p}}$ is Gorenstein.
See \cite{FIJ,FJ} for details about Gorenstein DG-rings.

According to \cite[Theorem 4.1]{ShCM},
if $A$ is a commutative noetherian DG-ring, 
and $R$ is a dualizing DG-module over $A$,
then $\amp(A) \le \amp(R)$.
A commutative noetherian local DG-ring $(A,\bar{\m})$ with bounded cohomology 
which has a dualizing DG-module $R$ is called local-Cohen-Macaulay if $\amp(A) = \amp(R)$.
In general, the local-Cohen-Macaulay property is defined even for local DG-rings which do not have dualizing DG-modules (either using local cohomology, or using regular sequences), but for local DG-rings which have dualizing DG-modules, the above definition is equivalent, and independent of the chosen dualizing DG-module.
See \cite[Theorem 2]{ShCM} for details.
A commutative noetherian DG-ring $A$ is called Cohen-Macaulay if for any $\bar{\p} \in \opn{Spec}(\mrm{H}^0(A))$,
the local DG-ring $A_{\bar{\p}}$ is local-Cohen-Macaulay.
Unlike the case of rings, for DG-rings it could happen that $A$ is local-Cohen-Macaulay but not Cohen-Macaulay.
Any Gorenstein DG-ring is Cohen-Macaulay.

Given a commutative noetherian local DG-ring $(A,\bar{\m})$,
and given $0 \ncong M \in \cat{D}^{\mrm{b}}_{\mrm{f}}(A)$,
an element $\bar{x} \in \bar{\m}$ is called $M$-regular
if it is $\mrm{H}^i(M)$-regular, where $i = \inf(M)$.
In other words, if the multiplication map 
\[
-\cdot \bar{x}: \mrm{H}^i(M) \to \mrm{H}^i(M)
\] 
is injective.

If $(A,\bar{\m})$ is a commutative noetherian local DG-ring,
and if $\bar{x} \in \bar{\m}$,
the identification $\mrm{H}^0(A) = \opn{Hom}_{\cat{D}(A)}(A,A) = \mrm{H}^0\left(\mrm{R}\opn{Hom}_A(A,A)\right)$
allows us to treat $\bar{x}$ as a morphism $A \to A$ in $\cat{D}(A)$.
The cone of this morphism is denoted by $A\sslash\bar{x}$.
This Koszul-type construction gives us quotients in the DG-setting.
It follows from \cite[Section 3.2]{Mi} that $A\sslash\bar{x}$ has the structure of a commutative noetherian local DG-ring,
with the property that $\mrm{H}^0(A\sslash\bar{x}) \cong \mrm{H}^0(A)/\bar{x}$.
If $M \in \cat{D}(A)$, we set $M\sslash\bar{x}M :=  A\sslash\bar{x} \otimes^{\mrm{L}}_A M$.
For a sequence $\bar{x}_1,\dots,\bar{x}_n \in \bar{\m}$, 
we define inductively 
\[
A\sslash(\bar{x}_1,\dots,\bar{x}_n) := (A\sslash\bar{x}_1)\sslash(\bar{x}_2,\dots,\bar{x}_{n}),
\]
by identifying $\bar{x}_2,\dots,\bar{x}_{n}$ with their images in $\mrm{H}^0(A\sslash\bar{x}_1)$.
We say that $\bar{x}_1,\dots\bar{x}_n \in \bar{\m}$ is an $A$-regular sequence if $\bar{x}_1$ is $A$-regular,
and $\bar{x}_2,\dots\bar{x}_n$ is $A\sslash\bar{x}_1$-regular.
It holds that $(A,\bar{\m})$ is local-Cohen-Macaulay if and only if there exist an $A$-regular sequence in $\bar{\m}$ of length $\dim(\mrm{H}^0(A))$.

We finish this section with the analogue of regular local rings in the setting of commutative DG-rings with bounded cohomology.
J{\o}rgensen showed in \cite[Theorem 0.2]{Jo} that if $A$ is a commutative noetherian local DG-ring with bounded cohomology,
and if $M \in \cat{D}^{\mrm{b}}_{\mrm{f}}(A)$ has finite projective dimension, 
then $\amp(M) \ge \amp(A)$. 
This implies that if we want all elements of $\cat{D}^{\mrm{b}}_{\mrm{f}}(A)$ to be of finite projective dimension,
then we must assume that $\amp(A) = 0$. 
In other words, in this setting, the only regular DG-rings are those which are quasi-isomorphic to regular rings.

\section{The regular and the Gorenstein loci}

We begin by discussing the Gorenstein locus of a commutative noetherian DG-ring.

\begin{thm}
Let $A$ be a commutative noetherian DG-ring. 
Assume that $\amp(A) < \infty$,
and that $A$ has a dualizing DG-module.
Then the set
\[
\opn{Gor}(A) = \{ \bar{\p} \in \opn{Spec}(\mrm{H}^0(A)) \mid A_{\bar{\p}} \mbox{ is a Gorenstein DG-ring}\}
\]
is an open subset of $\opn{Spec}(\mrm{H}^0(A))$.
\end{thm}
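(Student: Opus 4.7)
The plan is to prove openness by exhibiting, around each $\bar{\p} \in \opn{Gor}(A)$, an open neighborhood of $\bar{\p}$ in $\opn{Spec}(\mrm{H}^0(A))$ contained in $\opn{Gor}(A)$. Let $R$ denote the given dualizing DG-module; the cited result of Yekutieli gives $\amp(R) < \infty$. The hypothesis $\bar{\p} \in \opn{Gor}(A)$ says precisely that $A_{\bar{\p}}$ is a dualizing DG-module over itself. Since $R_{\bar{\p}}$ is also a dualizing DG-module over $A_{\bar{\p}}$, and dualizing DG-modules over a noetherian local DG-ring are unique up to a shift (the DG-analogue of Grothendieck's classical uniqueness, which goes through using the Matlis-type classification of $\opn{Inj}(A_{\bar{\p}})$ recalled in the preliminaries), there exist an integer $n = n(\bar{\p})$ and an isomorphism $R_{\bar{\p}} \cong A_{\bar{\p}}[-n]$ in $\cat{D}(A_{\bar{\p}})$.

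Next, I would lift this local isomorphism to a global morphism. Both $R$ and $A[-n]$ lie in $\cat{D}^{\mrm{b}}_{\mrm{f}}(A)$, and the finite injective dimension of $R$ forces $\mrm{R}\opn{Hom}_A(R,A[-n])$ to lie in $\cat{D}^{\mrm{b}}_{\mrm{f}}(A)$ as well. Since $A \to A_{\bar{\p}}$ is a flat localization (obtained by inverting a multiplicative set in $A^0$), flat base change should provide an isomorphism
\[
\mrm{R}\opn{Hom}_A(R,A[-n])_{\bar{\p}} \cong \mrm{R}\opn{Hom}_{A_{\bar{\p}}}(R_{\bar{\p}},A_{\bar{\p}}[-n]).
\]
Passing to $\mrm{H}^0$ and using that $\opn{Hom}_{\cat{D}(A)}(R,A[-n])$ is thereby finitely generated over $\mrm{H}^0(A)$, the isomorphism of the previous paragraph is represented, after clearing a denominator outside $\bar{\p}$, by a morphism $\phi \colon R \to A[-n]$ in $\cat{D}(A)$ whose localization $\phi_{\bar{\p}}$ is an isomorphism.

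The last step spreads the isomorphism via the cone construction. Set $C = \opn{cone}(\phi) \in \cat{D}^{\mrm{b}}_{\mrm{f}}(A)$. Then $\phi_{\bar{\q}}$ is an isomorphism if and only if $C_{\bar{\q}} \cong 0$, which happens if and only if $\bar{\q} \notin \opn{supp}(C) = \bigcup_i \opn{supp}_{\mrm{H}^0(A)}(\mrm{H}^i(C))$. The right-hand side is a finite union of closed subsets of $\opn{Spec}(\mrm{H}^0(A))$, so its complement $U$ is an open neighborhood of $\bar{\p}$. For any $\bar{\q} \in U$ we get $R_{\bar{\q}} \cong A_{\bar{\q}}[-n]$, so $A_{\bar{\q}}$ is a shift of a dualizing DG-module, hence itself a dualizing DG-module over $A_{\bar{\q}}$, giving $\bar{\q} \in \opn{Gor}(A)$. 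Thus $\bar{\p} \in U \subseteq \opn{Gor}(A)$.

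The principal technical hurdle, I expect, is the flat base change identity for $\mrm{R}\opn{Hom}$ in the DG setting against a flat localization of $A^0$, together with the attendant finiteness assertion that $\opn{Hom}_{\cat{D}(A)}(R,A[-n])$ is a finitely generated $\mrm{H}^0(A)$-module whose localization at $\bar{\p}$ computes $\opn{Hom}_{\cat{D}(A_{\bar{\p}})}(R_{\bar{\p}},A_{\bar{\p}}[-n])$. Once these inputs (together with the uniqueness up to a shift of dualizing DG-modules over a noetherian local DG-ring) are in place, the remaining cone-and-support spreading is the standard closing move, analogous to the classical proof of openness of the Gorenstein locus for rings with a dualizing complex.
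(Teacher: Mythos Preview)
Your overall strategy coincides with the paper's: fix $\bar{\p}\in\opn{Gor}(A)$, use uniqueness of dualizing DG-modules to get a local isomorphism between (a shift of) $R$ and $A$ at $\bar{\p}$, lift it to a global morphism in $\cat{D}(A)$, take the cone, and use that the support of a bounded finitely generated DG-module is closed. The one real difference is the \emph{direction} of the map. The paper lifts a morphism $A\to R$ (after shifting $R$), so that the relevant Hom group is simply $\opn{Hom}_{\cat{D}(A)}(A,R)=\mrm{H}^0(R)$; its finite generation and compatibility with localization are then immediate, and no base-change lemma for $\mrm{R}\opn{Hom}$ is needed. You instead lift a morphism $R\to A[-n]$, which forces you to invoke flat base change for $\mrm{R}\opn{Hom}_A(R,-)$ and a finiteness statement for $\mrm{H}^0(\mrm{R}\opn{Hom}_A(R,A[-n]))$. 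These do hold (because $R\in\cat{D}^{\mrm{b}}_{\mrm{f}}(A)$ admits a suitable semi-free resolution), but they are exactly the ``technical hurdle'' you flag, and the paper's orientation of the arrow sidesteps them entirely.

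One genuine slip: you write that ``the finite injective dimension of $R$ forces $\mrm{R}\opn{Hom}_A(R,A[-n])$ to lie in $\cat{D}^{\mrm{b}}_{\mrm{f}}(A)$.'' Finite injective dimension of $R$ controls $\mrm{R}\opn{Hom}_A(-,R)$, not $\mrm{R}\opn{Hom}_A(R,-)$; in general $\mrm{R}\opn{Hom}_A(R,A)$ need not be bounded when $A$ is not Gorenstein. Fortunately your argument does not actually require boundedness of this complex, only that $\mrm{H}^0$ is finitely generated over $\mrm{H}^0(A)$ and commutes with localization at $\bar{\p}$, and both follow from $R\in\cat{D}^{-}_{\mrm{f}}(A)$. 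So the proof goes through once that sentence is corrected, but the paper's choice of direction makes the whole issue disappear.
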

\begin{proof}
Let $\bar{\p} \in \opn{Gor}(A)$, so that $A_{\bar{\p}}$ is a Gorenstein DG-ring.
Let $R$ be a dualizing DG-module over $A$.
By \cite[Corollary 6.11]{Sh}, the localization $R_{\bar{\p}}$ is a dualizing DG-module over $A_{\bar{\p}}$.
According to \cite[Corollary 7.16]{Ye1},
the uniqueness of dualizing DG-modules over local DG-rings implies that there exists some $n \in \mathbb{Z}$ such that
$R_{\bar{\p}}[n] \cong A_{\bar{\p}}$. 
Let us replace $R$ by $R[n]$,
so that $R_{\bar{\p}} \cong A_{\bar{\p}}$.
Denote by $\alpha$ an isomorphism $\alpha:A_{\bar{\p}} \to R_{\bar{\p}}$ in $\cat{D}(A_{\bar{\p}})$,
and consider the isomorphisms
\begin{eqnarray}
\opn{Hom}_{\cat{D}(A)}(A,R) = \mrm{H}^0\left(\mrm{R}\opn{Hom}_A(A,R)\right) \cong \mrm{H}^0(R),\nonumber\\
\opn{Hom}_{\cat{D}(A_{\bar{\p}})}(A_{\bar{\p}},R_{\bar{\p}}) 
= \mrm{H}^0\left(\mrm{R}\opn{Hom}_{A_{\bar{\p}}}(A_{\bar{\p}},R_{\bar{\p}})\right) \cong \mrm{H}^0(R)_{\bar{\p}}.\nonumber
\end{eqnarray}

Using these isomorphisms, 
consider the following commutative diagram of $\mrm{H}^0(A)$-modules:
\[
\xymatrix
{
\opn{Hom}_{\cat{D}(A)}(A,R) \ar[r]\ar[d]_{\Phi} & \mrm{H}^0(R)\ar[d]^{\Psi}\\
\opn{Hom}_{\cat{D}(A_{\bar{\p}})}(A_{\bar{\p}},R_{\bar{\p}}) \ar[r] & \mrm{H}^0(R)_{\bar{\p}}
}
\]
Here, the horizontal maps are the above isomorphisms, 
the right vertical map $\Psi$ is the localization map,
and then the left vertical map $\Phi$ is the unique map making this diagram commutative.
It can be realized by the functor $-\otimes^{\mrm{L}}_A A_{\bar{\p}}$. 
The isomorphism $\alpha \in \opn{Hom}_{\cat{D}(A_{\bar{\p}})}(A_{\bar{\p}},R_{\bar{\p}})$ need not be in the image of $\Phi$.
However, as can be seen from the behavior of $\Psi$,
there exists $\beta \in \opn{Hom}_{\cat{D}(A)}(A,R)$ and $\bar{f} \in \mrm{H}^0(A)$,
such that $\bar{f} \notin \bar{\p}$, 
and such that $\Phi(\beta) = \bar{f} \cdot \alpha$.
Since $\bar{f} \notin \bar{\p}$, 
it is invertible in $\mrm{H}^0(A_{\bar{\p}})$,
so that the morphism $\alpha' := \bar{f} \cdot \alpha$ is also an isomorphism.
Let us complete $\beta$ to a distinguished triangle
\[
A \xrightarrow{\beta} R \to K \to A[1]
\]
in $\cat{D}(A)$. 
Since $A,R \in \cat{D}^{\mrm{b}}_{\mrm{f}}(A)$,
and it is a triangulated category, 
we deduce that the cone $K$ also belongs to $\cat{D}^{\mrm{b}}_{\mrm{f}}(A)$.
Applying the triangulated functor $-\otimes^{\mrm{L}}_A A_{\bar{\p}}$ to the above distinguished triangle,
we obtain a distinguished triangle
\[
A_{\bar{\p}} \xrightarrow{\alpha'} R_{\bar{\p}} \to K_{\bar{\p}} \to A_{\bar{\p}}[1]
\]
in $\cat{D}(A_{\bar{\p}})$. 
Since $\alpha'$ is an isomorphism, we deduce that $K_{\bar{\p}} \cong 0$.
Let
\[
V = \opn{Supp}\left(\mrm{H}^{*}(K)\right) := \bigcup_{i=\inf(K)}^{\sup(K)} \opn{Supp}\left(\mrm{H}^i(K)\right).
\]
The fact that $K$ is a bounded DG-module implies that $-\infty < \inf(K) \le \sup(K) < +\infty$,
so that $V$ is a closed subset of $\opn{Spec}(\mrm{H}^0(A))$.
Since $K_{\bar{\p}} \cong 0$, we deduce that $\bar{\p} \notin V$.
For any $\bar{\q} \in \opn{Spec}(\mrm{H}^0(A))$ such that $\bar{\q} \notin V$,
we have that $K_{\bar{\q}} \cong 0$, so that $A_{\bar{\q}} \cong R_{\bar{\q}}$.
By \cite[Corollary 6.11]{Sh}, $R_{\bar{\q}}$ is a dualizing DG-module over $A_{\bar{\q}}$,
so we deduce that $A_{\bar{\q}}$ is a Gorenstein DG-ring.
Let $W$ denote the complement of $V$ in $\opn{Spec}(\mrm{H}^0(A))$.
Then $W$ is an open set, $\bar{\p} \in W$, and $W \subseteq \opn{Gor}(A)$,
so that $\opn{Gor}(A)$ is an open set.
\end{proof}

Next, we would like to show that the open set $\opn{Gor}(A)$ is often empty.
To do that, we first need some auxiliary results.

\begin{prop}\label{prop:GorRegSeq}
Let $(A,\bar{\m})$ be a commutative noetherian local DG-ring with bounded cohomology,
and let $\bar{x} \in \bar{\m}$ be an $A$-regular element.
Then $A$ is Gorenstein if and only if $A\sslash\bar{x}$ is Gorenstein.
\end{prop}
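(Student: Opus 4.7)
The plan is to deduce both implications from a single ``Ext-shift'' identity, combined with the following characterization available in \cite{FIJ,FJ}: a commutative noetherian local DG-ring $(B, \bar{\n})$ with bounded cohomology is Gorenstein if and only if $\mrm{R}\opn{Hom}_B(k, B)$ has bounded cohomology, where $k = \mrm{H}^0(B)/\bar{\n}$. Since the residue fields of $A$ and of $A\sslash\bar{x}$ coincide (using that $\mrm{H}^0(A\sslash\bar{x}) = \mrm{H}^0(A)/\bar{x}\mrm{H}^0(A)$ and $\bar{x} \in \bar{\m}$), it suffices to establish
\[
\mrm{R}\opn{Hom}_{A\sslash\bar{x}}(k, A\sslash\bar{x}) \;\cong\; \mrm{R}\opn{Hom}_A(k, A)[1],
\]
because boundedness on one side is then equivalent to boundedness on the other.

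I will obtain this as the case $M = k$ of a more general base-change identity valid for every $M \in \cat{D}(A\sslash\bar{x})$:
\[
\mrm{R}\opn{Hom}_{A\sslash\bar{x}}(M, A\sslash\bar{x}) \;\cong\; \mrm{R}\opn{Hom}_A(M, A)[1].
\]
The key ingredient is the Koszul self-duality $\mrm{R}\opn{Hom}_A(A\sslash\bar{x}, A) \cong A\sslash\bar{x}[-1]$ in $\cat{D}(A\sslash\bar{x})$. Applying the contravariant functor $\mrm{R}\opn{Hom}_A(-,A)$ to the defining distinguished triangle $A \xrightarrow{\bar{x}} A \to A\sslash\bar{x} \to A[1]$ identifies $\mrm{R}\opn{Hom}_A(A\sslash\bar{x}, A)$ with the homotopy fiber of $\bar{x}\colon A \to A$, which is $A\sslash\bar{x}[-1]$. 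The restriction-of-scalars adjunction for the DG-ring map $A \to A\sslash\bar{x}$ then gives, for $M \in \cat{D}(A\sslash\bar{x})$,
\[
\mrm{R}\opn{Hom}_A(M, A) \;\cong\; \mrm{R}\opn{Hom}_{A\sslash\bar{x}}(M, \mrm{R}\opn{Hom}_A(A\sslash\bar{x}, A)) \;\cong\; \mrm{R}\opn{Hom}_{A\sslash\bar{x}}(M, A\sslash\bar{x})[-1],
\]
which is the base-change identity.

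The main obstacle I anticipate is upgrading the Koszul self-duality from an isomorphism in $\cat{D}(A)$, which is forced by the distinguished triangle, to one in $\cat{D}(A\sslash\bar{x})$. This expresses the Frobenius (self-dual) property of the commutative DG-algebra $A\sslash\bar{x}$ over $A$, which in the commutative non-positive setting is an exterior algebra on one generator in degree $-1$, and can be verified by an explicit unpacking of the $A\sslash\bar{x}$-action on both sides of the identification. The regularity of $\bar{x}$ does not enter visibly in the Ext-shift computation itself; it is used to ensure that $A\sslash\bar{x}$ inherits the noetherian local structure (via \cite[Section 3.2]{Mi}) and bounded cohomology required for the quoted Gorenstein characterization to apply on both sides.
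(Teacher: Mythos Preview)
Your approach is correct and genuinely different from the paper's. The paper simply invokes the inequalities
\[
\injdim_A(A) - 1 \le \injdim_{A\sslash\bar{x}}(A\sslash\bar{x}) \le \injdim_A(A)
\]
from \cite[Corollary 3.32]{Mi} and concludes, since Gorenstein means $\injdim_A(A) < \infty$. Your argument instead establishes the Ext-shift identity $\mrm{R}\opn{Hom}_{A\sslash\bar{x}}(k, A\sslash\bar{x}) \cong \mrm{R}\opn{Hom}_A(k, A)[1]$ directly via Koszul self-duality and the restriction--coextension adjunction, and then appeals to the characterization of Gorenstein through boundedness of $\mrm{R}\opn{Hom}_B(k,B)$. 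The paper's route is shorter because it outsources the work to \cite{Mi}; yours is more self-contained and makes the mechanism transparent. The point you flag as the main obstacle, upgrading the Koszul self-duality to $\cat{D}(A\sslash\bar{x})$, is indeed the only place requiring care, and it is handled exactly as you say, by the Frobenius structure of the exterior algebra $A\sslash\bar{x}$ over $A$.

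One small correction: your account of where regularity is used is not quite right. The noetherian local DG-ring structure on $A\sslash\bar{x}$ from \cite[Section 3.2]{Mi} and the boundedness of its cohomology both follow from the defining distinguished triangle for any $\bar{x} \in \bar{\m}$, with no regularity assumption. In fact your Ext-shift identity and the resulting equivalence hold without assuming $\bar{x}$ is $A$-regular, so your argument actually proves a slightly stronger statement than the proposition. (By contrast, the inequalities quoted from \cite{Mi} in the paper's proof do use regularity.)
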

\begin{proof}
Since $A$ is Gorenstein if and only if $\injdim_A(A) < \infty$,
the result follows immediately from the inequalities
\[
\injdim_A(A) - 1 \le \injdim_{A\sslash\bar{x}}(A\sslash\bar{x}) \le \injdim_A(A)
\]
proved in \cite[Corollary 3.32]{Mi}.
\end{proof}

The next result is a criterion for a local noetherian DG-ring $A$ with $\dim(\mrm{H}^0(A)) = 0$ to be Gorenstein.
This generalizes \cite[Theorem 3.1, $(i)^{\sim} \iff (iv)$]{AF},
where Avramov and Foxby proved a similar result for noetherian local DG-rings $A$ such that $\mrm{H}^0(A)$ is a field. 

\begin{prop}\label{prop:ZDGor}
Let $(A,\bar{\m})$ be a commutative noetherian local DG-ring,
and suppose that $\dim(\mrm{H}^0(A)) = 0$ and $n = \amp(A) < \infty$.
Let $E = E(A,\bar{\m})$.
Then $A$ is a Gorenstein DG-ring if and only if
there is an isomorphism $A \cong E[n]$ in $\cat{D}(A)$.
In particular, if $A$ is Gorenstein,
then for all $0 \le i \le n$, 
there is an isomorphism of $\mrm{H}^0(A)$-modules:
\[
\mrm{H}^{-i}(A) \cong \opn{Hom}_{\mrm{H}^0(A)}(\mrm{H}^{-n+i}(A),\mrm{H}^0(E)).
\]
\end{prop}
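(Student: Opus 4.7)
The plan is to prove the equivalence by two implications, then deduce the cohomology formula.

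For the implication $A \cong E[n] \Rightarrow A$ Gorenstein, I use that $E = E(A,\bar{\m})$, being an element of $\opn{Inj}(A)$, has finite injective dimension. Hence $A \cong E[n]$ has finite injective dimension, and combined with the trivial facts $A \in \cat{D}_{\mrm{f}}(A)$ and $A \iso \mrm{R}\opn{Hom}_A(A,A)$, this exhibits $A$ as a dualizing DG-module over itself, so $A$ is Gorenstein by definition.

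For the converse, assume $A$ is Gorenstein, so $A$ is a dualizing DG-module over itself. My plan is to verify that $A[-n]$ satisfies the characterizing properties of $E$ listed in the preliminaries. The condition $\inf(A[-n]) > -\infty$ is immediate from $\inf(A) = -n$. To show $A[-n] \in \opn{Inj}(A)$ and that $\mrm{R}\opn{Hom}_A(\mrm{H}^0(A), A[-n]) \cong \mrm{H}^0(A[-n]) \cong E(\mrm{H}^0(A),\bar{\m})$, I will use that $A$, being dualizing, has finite injective dimension, so by the structure theory of \cite{ShInj} it is quasi-isomorphic to a bounded complex of objects of $\opn{Inj}(A)$. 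Because $\opn{Spec}(\mrm{H}^0(A)) = \{\bar{\m}\}$ in the zero-dimensional case, the only indecomposable in $\opn{Inj}(A)$ is $E$. Matching cohomological amplitudes together with the non-vanishing of $\mrm{H}^0(A)$ and $\mrm{H}^{-n}(A)$ then pins down $A$ as a single shift, $A \cong E[n]$. This step, distinguishing a single shift of $E$ from a longer complex of copies of $E$, is the main obstacle and requires a careful Matlis-style analysis.

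For the cohomology formula, given $A \cong E[n]$, the key technical ingredient is the adjunction $\mrm{R}\opn{Hom}_A(N, E) \cong \opn{Hom}_{\mrm{H}^0(A)}(N, \mrm{H}^0(E))$, valid for any finitely generated $\mrm{H}^0(A)$-module $N$. This follows from the base-change adjunction along $A \to \mrm{H}^0(A)$, the defining property $\mrm{R}\opn{Hom}_A(\mrm{H}^0(A), E) \cong \mrm{H}^0(E)$, and the injectivity of $\mrm{H}^0(E) = E(\mrm{H}^0(A),\bar{\m})$ over the Artinian local ring $\mrm{H}^0(A)$. Filtering $E$ by its cohomological truncations $\tau^{\ge p}E$ and applying $\mrm{R}\opn{Hom}_A(-, E)$ produces a spectral sequence whose $E_2$-page is concentrated on a single line, hence degenerates. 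Comparing the abutment with $\mrm{R}\opn{Hom}_A(E, E) \cong \mrm{R}\opn{Hom}_A(A[-n], A[-n]) \cong A$ and translating via $\mrm{H}^p(E) = \mrm{H}^{p-n}(A)$ yields $\mrm{H}^{-i}(A) \cong \opn{Hom}_{\mrm{H}^0(A)}(\mrm{H}^{-n+i}(A), \mrm{H}^0(E))$, completing the proof.
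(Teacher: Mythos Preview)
Your forward implication and your spectral-sequence derivation of the cohomology formula are both sound (the latter is essentially a reproof of \cite[Corollary 4.12]{ShInj}, which the paper simply cites). The problem is the converse direction. You propose to show $A\cong E[n]$ by writing $A$ as a bounded complex of objects of $\opn{Inj}(A)$, noting that $E$ is the only indecomposable such object when $\dim(\mrm{H}^0(A))=0$, and then arguing by amplitude that this complex collapses to a single shifted copy of $E$. You yourself flag this last step as ``the main obstacle'' requiring a ``careful Matlis-style analysis,'' and indeed you do not supply one. In the DG setting it is not even clear what ``a bounded complex of objects of $\opn{Inj}(A)$'' means, since the objects of $\opn{Inj}(A)$ are already complexes; and even granting a sensible interpretation, matching amplitudes alone does not rule out, say, an iterated extension of shifts of $E$ with the same cohomological spread. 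So this step is a genuine gap, not just a missing detail.

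The paper avoids this difficulty entirely by reversing the roles: instead of showing that $A[-n]$ satisfies the characterizing properties of $E$, it shows directly that $E$ is a dualizing DG-module over $A$. This is immediate once one observes that $\mrm{R}\opn{Hom}_A(\mrm{H}^0(A),E)\cong E(\mrm{H}^0(A),\bar{\m})$ is a dualizing complex over the artinian local ring $\mrm{H}^0(A)$, and then invokes \cite[Proposition 7.20]{ShInj}. With both $A$ and $E$ dualizing, uniqueness of dualizing DG-modules up to shift \cite[Corollary 7.16]{Ye1} gives $A\cong E[m]$ for some $m$, and comparing $\inf(A)=-n$ with $\inf(E)=0$ forces $m=n$. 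This two-line argument replaces your entire ``main obstacle.''
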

\begin{proof}
Since the local ring $\mrm{H}^0(A)$ is artinian, 
it follows that $E(\mrm{H}^0(A),\bar{\m})$, 
which is just the injective hull of $\mrm{H}^0(A)/\bar{\m}$ is a dualizing complex over $\mrm{H}^0(A)$.
It follows from the definition of $E(A,\bar{\m})$ that
\[
\mrm{R}\opn{Hom}_A(\mrm{H}^0(A),E(A,\bar{\m})) \cong E(\mrm{H}^0(A),\bar{\m}),
\]
and the fact that $\mrm{H}^0(A)$ is artinian implies that $E(A,\bar{\m}) \in \cat{D}^{+}_{\mrm{f}}(A)$.
Hence, by \cite[Proposition 7.20]{ShInj}, the DG-module $E(A,\bar{\m})$ is a dualizing DG-module over $A$.
Since, according to \cite[Corollary 7.16]{Ye1}, dualizing DG-modules over local DG-rings are unique up to a shift,
we deduce that $A$ is Gorenstein, 
that is, that $A$ is a dualizing DG-module over itself if and only if $A \cong E[n]$ in $\cat{D}(A)$.
Assuming now this is the case, 
for all $0 \le i \le n$, we have that
\[
\mrm{H}^{-i}(A) \cong \mrm{H}^{-i}(E[n]) = \mrm{H}^{n-i}(E) \cong \opn{Hom}_{\mrm{H}^0(A)}(\mrm{H}^{-n+i}(A),\mrm{H}^0(E)),
\]
where the last isomorphism follows from \cite[Corollary 4.12]{ShInj}.
\end{proof}

Combining these two propositions, we obtain the following criterion for being Gorenstein:
\begin{cor}\label{Cor:Gor}
Let $(A,\bar{\m})$ be a commutative noetherian local DG-ring such that $n = \amp(A) < \infty$.
Then $A$ is Gorenstein if and only if $A$ is a Cohen-Macaulay DG-ring,
and for some maximal $A$-regular sequence $\bar{x}_1,\dots,\bar{x}_d \in \bar{\m}$,
there is an isomorphism
\[
A\sslash(\bar{x}_1,\dots,\bar{x}_d) \cong E(A\sslash(\bar{x}_1,\dots,\bar{x}_d),\bar{\m}/(\bar{x}_1,\dots,\bar{x}_d))[n]
\]
in $\cat{D}(A\sslash(\bar{x}_1,\dots,\bar{x}_d))$.
\end{cor}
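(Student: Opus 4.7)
The plan is to reduce to the zero-dimensional case of Proposition \ref{prop:ZDGor}, using Proposition \ref{prop:GorRegSeq} as an induction step to shuttle the Gorenstein property across the regular sequence. Two preparatory observations will carry most of the weight. First, killing an $A$-regular element $\bar{x}$ preserves amplitude: the long exact cohomology sequence coming from the triangle $A \xrightarrow{\bar{x}} A \to A\sslash\bar{x} \to A[1]$, together with $A$-regularity at the bottom and Nakayama's lemma at the top, forces $\inf(A\sslash\bar{x}) = \inf(A)$ and $\sup(A\sslash\bar{x}) = \sup(A)$, hence $\amp(A\sslash\bar{x}) = \amp(A)$. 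Second, since local-Cohen-Macaulayness is preserved under quotienting by a regular element (this is part of the theory developed in \cite{ShCM}), in a local-CM DG-ring every maximal $A$-regular sequence $\bar{x}_1, \dots, \bar{x}_d$ has length $d = \dim(\mrm{H}^0(A))$, and the quotient $A' := A\sslash(\bar{x}_1, \dots, \bar{x}_d)$ satisfies $\dim(\mrm{H}^0(A')) = 0$.

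For the forward direction, assume $A$ is Gorenstein. Since any Gorenstein DG-ring is Cohen-Macaulay, $A$ is local-CM, so a maximal $A$-regular sequence $\bar{x}_1, \dots, \bar{x}_d$ in $\bar{\m}$ exists. Iterating Proposition \ref{prop:GorRegSeq} along this sequence shows $A'$ is Gorenstein. The two preparatory observations give $\amp(A') = n$ and $\dim(\mrm{H}^0(A')) = 0$, so Proposition \ref{prop:ZDGor} yields the required isomorphism $A' \cong E(A', \bar{\m}/(\bar{x}_1, \dots, \bar{x}_d))[n]$. For the converse, assume $A$ is CM and such a maximal regular sequence with the stated isomorphism is given. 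The same preparatory observations again give $\amp(A') = n$ and $\dim(\mrm{H}^0(A')) = 0$, so Proposition \ref{prop:ZDGor} identifies $A'$ as Gorenstein. Iterating Proposition \ref{prop:GorRegSeq} in the reverse direction then shows that $A$ itself is Gorenstein.

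The main obstacle is the second preparatory observation. The amplitude statement is a short long-exact-sequence chase, but the claim that local-CM is preserved under killing regular elements, and that maximal $A$-regular sequences in a CM local DG-ring have length exactly $\dim(\mrm{H}^0(A))$, requires the DG-version of standard CM theory from \cite{ShCM}. Once these are secured, the corollary is a straightforward assembly of Propositions \ref{prop:GorRegSeq} and \ref{prop:ZDGor}.
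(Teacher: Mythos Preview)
Your argument is correct and follows essentially the same route as the paper: reduce to the zero-dimensional quotient via the Cohen--Macaulay theory of \cite{ShCM} (the paper cites \cite[Corollary 5.21]{ShCM} for the system-of-parameters regular sequence), then shuttle the Gorenstein property back and forth using Proposition~\ref{prop:GorRegSeq} and invoke Proposition~\ref{prop:ZDGor} at the bottom. Your explicit verification that $\amp(A\sslash\bar{x}) = \amp(A)$ is a point the paper leaves implicit, but otherwise the two proofs coincide.
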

\begin{proof}
Assuming $A$ is Cohen-Macaulay, by \cite[Corollary 5.21]{ShCM}, 
there is a maximal $A$-regular sequence $\bar{x}_1,\dots,\bar{x}_d \in \bar{\m}$ of length $d = \dim(\mrm{H}^0(A))$,
and moreover the sequence $\bar{x}_1,\dots,\bar{x}_d \in \bar{\m}$ is a system of parameters of $\mrm{H}^0(A)$.
Then, by Proposition \ref{prop:GorRegSeq},
the DG-ring $A\sslash(\bar{x}_1,\dots,\bar{x}_d)$ is Gorenstein if and only if $A$ is Gorenstein,
and the latter has 
\[
\dim(\mrm{H}^0(A\sslash(\bar{x}_1,\dots,\bar{x}_d))) = 0.
\]
The result now follows from Proposition \ref{prop:ZDGor}.
\end{proof}

Using the above characterization of Gorenstein DG-rings,
we now show that it is often the case that commutative noetherian DG-rings have empty Gorenstein loci:

\begin{prop}\label{prop:GorEmpty}
Let $B$ be a commutative noetherian ring.
Then there exists a commutative noetherian DG-ring $A$ with $\mrm{H}^0(A) = B$,
such that $\amp(A) < \infty$, and such that 
\[
\opn{Gor}(A) = \{ \bar{\p} \in \opn{Spec}(\mrm{H}^0(A)) \mid A_{\bar{\p}} \mbox{ is a Gorenstein DG-ring}\} = \emptyset.
\]
\end{prop}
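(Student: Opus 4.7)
The plan is to exhibit a concrete $A$ obtained as a trivial square-zero extension of $B$ by $B^2$ placed in cohomological degree $-1$, so that the ``doubling'' forces a length mismatch under Matlis duality at every prime. Explicitly, I set $A^0 = B$, $A^{-1} = B^2$, $A^i = 0$ otherwise, zero differential, the evident $B$-module action, and the forced zero product on $A^{-1}\times A^{-1}\to A^{-2} = 0$. This $A = B\ltimes B^2[1]$ is a commutative noetherian DG-ring with $\mrm{H}^0(A) = B$, $\mrm{H}^{-1}(A) = B^2$, and $\amp(A) = 1$.

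Fix $\bar{\p}\in\opn{Spec}(B)$. Since localization commutes with the trivial extension, $A_{\bar{\p}} = B_{\bar{\p}}\ltimes B_{\bar{\p}}^2[1]$, and an element $\bar{y}\in\bar{\p} B_{\bar{\p}}$ is $A_{\bar{\p}}$-regular iff it is regular on $\mrm{H}^{-1}(A_{\bar{\p}}) = B_{\bar{\p}}^2$ iff it is $B_{\bar{\p}}$-regular. The routine computation at the heart of the proof is a Koszul reduction: the long exact cohomology sequence attached to $A_{\bar{\p}}\xrightarrow{\bar{y}}A_{\bar{\p}}\to A_{\bar{\p}}\sslash\bar{y}$ collapses (regularity on $B_{\bar{\p}}^2$ implies regularity on $B_{\bar{\p}}$) to give $\mrm{H}^0(A_{\bar{\p}}\sslash\bar{y}) = B_{\bar{\p}}/\bar{y}$ and $\mrm{H}^{-1}(A_{\bar{\p}}\sslash\bar{y}) = (B_{\bar{\p}}/\bar{y})^2$. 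Iterating, for any $B_{\bar{\p}}$-regular sequence $\bar{x}_1,\dots,\bar{x}_k\in\bar{\p} B_{\bar{\p}}$ the DG-ring $A_{\bar{\p}}\sslash(\bar{x}_1,\dots,\bar{x}_k)$ retains the doubled shape, and maximal $A_{\bar{\p}}$-regular sequences in $\bar{\p} B_{\bar{\p}}$ have length exactly $\depth(B_{\bar{\p}})$.

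Then I split into two cases. If $B_{\bar{\p}}$ is not Cohen-Macaulay, then $\depth(B_{\bar{\p}}) < \dim(B_{\bar{\p}}) = \dim(\mrm{H}^0(A_{\bar{\p}}))$, so $A_{\bar{\p}}$ is not local-Cohen-Macaulay, and hence not Gorenstein. Otherwise, pick a maximal $A_{\bar{\p}}$-regular sequence $\bar{x}_1,\dots,\bar{x}_d\in\bar{\p} B_{\bar{\p}}$ (equivalently, a system of parameters of $B_{\bar{\p}}$) and set $C := B_{\bar{\p}}/(\bar{x}_1,\dots,\bar{x}_d)$, an Artinian local ring. If $A_{\bar{\p}}$ were Gorenstein, then Corollary \ref{Cor:Gor} combined with Proposition \ref{prop:ZDGor} applied at $n = 1$, $i = 1$ to the Artinian-base DG-ring $A_{\bar{\p}}\sslash(\bar{x}_1,\dots,\bar{x}_d)$ would force
\[
C^2 \;\cong\; \opn{Hom}_C(C, E(C,\bar{\m}_C)) \;=\; E(C,\bar{\m}_C)
\]
as $C$-modules. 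But Matlis duality over the Artinian local ring $C$ gives $\opn{length}_C(E(C,\bar{\m}_C)) = \opn{length}_C(C)$, contradicting $\opn{length}_C(C^2) = 2\opn{length}_C(C)$.

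The main point requiring care is the Koszul bookkeeping — verifying that at each stage the long exact sequence really does collapse so as to preserve the doubled shape, and that the relevant regularity conditions on $A_{\bar{\p}}\sslash(\bar{x}_1,\dots,\bar{x}_k)$ reduce cleanly to regularity on $B_{\bar{\p}}/(\bar{x}_1,\dots,\bar{x}_k)$. Once that is in place, the length mismatch delivered by Matlis duality finishes both cases in one line.
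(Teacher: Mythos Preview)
Your argument is correct and essentially the same as the paper's: both build the trivial square-zero extension of $B$ by $B^{2}$ (you place it in degree $-1$, the paper in degree $-2$), observe that this shape is preserved under localization and Koszul quotients by regular elements, and then derive a Matlis-duality contradiction over the Artinian quotient---you compare lengths of $C^{2}$ and $E$, while the paper compares $\opn{Hom}_C(E,E)\cong C^{4}$ with $\opn{Hom}_C(E,E)\cong C$. The only other difference is that you make the non-Cohen--Macaulay case explicit, whereas the paper absorbs it into the invocation of Corollary~\ref{Cor:Gor}; none of these variations is substantive.
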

\begin{proof}
For any commutative noetherian ring $B$,
let $T(B)$ be the graded ring with 
\[
T(B)^0 = B, \quad T(B)^{-2} = B\oplus B, \quad T(B)^i = 0 \mbox{ for all $i \ne 0,-2$.}
\]
This is a graded-commutative ring in an obvious way, 
and we make it a commutative DG-ring by letting the differential be zero.
Observe that $\mrm{H}^0(T(B)) = B$,
that for any $\p \in \opn{Spec}(B)$, 
we have that $T(B)_{\p} \cong T(B_{\p})$,
and that if $(B,\m)$ is local and $x \in \m$ is $T(B)$-regular,
then $T(B)\sslash x \cong T(B/x)$.
Let $A = T(B)$. 
We claim that  $\opn{Gor}(A) = \emptyset$.
One way to show this is using \cite[Theorem 2.2]{J}.
Here we offer for this particular case an alternative proof. 
Suppose this is not the case, and let $\p \in \opn{Gor}(A)$.
Then $A_{\p} \cong T(B_{\p})$ is Gorenstein.
By Corollary \ref{Cor:Gor}, 
there are $x_1,\dots,x_d \in \p\cdot B_{\p}$,
such that the sequence $x_1,\dots,x_d$ is $A_{\p}$-regular,
and the DG-ring $C = A_{\p}\sslash(x_1,\dots,x_d)$ is Gorenstein with $\dim(\mrm{H}^0(C)) = 0$.
Letting $D = B_{\p}/(x_1,\dots,x_d)$,
we have that $C \cong T(D)$.
Since $C$ is Gorenstein, 
letting $E$ denote the injective hull of the residue field of $D$,
it follows from Proposition \ref{prop:ZDGor} that
\[
D \oplus D \cong H^{-2}(C) \cong \opn{Hom}_D(\mrm{H}^0(C),E) \cong \opn{Hom}_D(D,E) \cong E.
\]
So that
\[
\opn{Hom}_D(E,E) \cong \opn{Hom}_D(D\oplus D,D\oplus D) \cong D \oplus D \oplus D \oplus D.
\]
However, since $D$ is an artinian local ring,
by Matlis duality we have that $\opn{Hom}_D(E,E) \cong D$.
This is a contradiction, so $\opn{Gor}(A) = \emptyset$.
\end{proof}

\begin{rem}
Since there are obvious maps of DG-rings $B \to A \to B$ whose composition is $1_B$,
it follows from \cite[Proposition 7.5]{Ye1} that $A$ has a dualizing DG-module if and only if $B$ has a dualizing complex.
\end{rem}

We finish this section with a result about the regular locus of a commutative noetherian DG-ring with bounded cohomology.
Recall from our discussion in the preliminaries section 
that such a DG-ring is regular if and only if it is quasi-isomorphic to a regular ring.
This suggests the following definition for the regular locus of a commutative noetherian DG-ring $A$ with bounded cohomology:
\[
\opn{Reg}(A) = \{\bar{\p} \in \opn{Spec}(\mrm{H}^0(A)) \mid A_{\bar{\p}} \mbox{ is quasi-isomorphic to a regular local ring}\}
\]

A commutative noetherian ring is called J-1 if its regular locus is open.
See \cite[Section 32.B]{Mat} for a discussion about the J-conditions for rings,
and \cite{IT} for recent results about these properties.

\begin{prop}
Let $A$ be a commutative noetherian DG-ring. 
Assume that $\amp(A) < \infty$,
and that the noetherian ring $\mrm{H}^0(A)$ is J-1.
That is, assume that the set
\[
\opn{Reg}(\mrm{H}^0(A)) = \{\bar{\p} \in \opn{Spec}(\mrm{H}^0(A)) \mid {H}^0(A)_{\bar{\p}} \mbox{ is a regular local ring}\}
\]
is an open subset of $\opn{Spec}(\mrm{H}^0(A))$.
Then the set
\[
\opn{Reg}(A) = \{\bar{\p} \in \opn{Spec}(\mrm{H}^0(A)) \mid A_{\bar{\p}} \mbox{ is quasi-isomorphic to a regular local ring}\}
\]
is an open subset of $\opn{Spec}(\mrm{H}^0(A))$.
\end{prop}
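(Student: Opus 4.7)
The plan is to characterize $\opn{Reg}(A)$ as the intersection of two open subsets of $\opn{Spec}(\mrm{H}^0(A))$. The key observation, following the J{\o}rgensen-based discussion at the end of the preliminaries, is that $A_{\bar{\p}}$ is quasi-isomorphic to a regular local ring if and only if both of the following hold:
\begin{enumerate}
\item[(i)] $\mrm{H}^i(A)_{\bar{\p}} = 0$ for all $i<0$, i.e.\ $\amp(A_{\bar{\p}}) = 0$;
\item[(ii)] $\mrm{H}^0(A)_{\bar{\p}}$ is a regular local ring.
\end{enumerate}
Indeed, if $A_{\bar{\p}}$ is quasi-isomorphic to a regular local ring $R$, then $R$ is concentrated in degree $0$, so all negative cohomologies of $A_{\bar{\p}}$ vanish and $\mrm{H}^0(A_{\bar{\p}}) \cong R$. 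Conversely, when (i) holds, the canonical truncation map $A_{\bar{\p}} \to \mrm{H}^0(A_{\bar{\p}})$ is a quasi-isomorphism, and then (ii) says the target is regular.

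The first step is to show that the locus where (i) holds is open. Since $\amp(A) < \infty$, the set
\[
V = \bigcup_{i=\inf(A)}^{-1} \opn{Supp}\bigl(\mrm{H}^i(A)\bigr)
\]
is a finite union of supports of finitely generated $\mrm{H}^0(A)$-modules, hence a closed subset of $\opn{Spec}(\mrm{H}^0(A))$. Using the natural isomorphism $\mrm{H}^i(A_{\bar{\p}}) \cong \mrm{H}^i(A)_{\bar{\p}}$ recalled in the preliminaries, condition (i) holds at $\bar{\p}$ if and only if $\bar{\p} \notin V$.

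The second step is immediate from the hypothesis: the locus where (ii) holds is exactly $\opn{Reg}(\mrm{H}^0(A))$, which is open in $\opn{Spec}(\mrm{H}^0(A))$ by the J-$1$ assumption. Combining the two, we obtain
\[
\opn{Reg}(A) = \opn{Reg}(\mrm{H}^0(A)) \cap \bigl(\opn{Spec}(\mrm{H}^0(A)) \setminus V\bigr),
\]
which is open as the intersection of two open sets.

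There is no real obstacle in this argument; the content is essentially packaged into the preliminaries. The only point requiring care is the equivalence between being quasi-isomorphic to a regular local ring and the conjunction of (i) and (ii): the forward direction is trivial, and the reverse direction relies on the fact that when the amplitude localizes to zero, the canonical truncation $A_{\bar{\p}} \to \mrm{H}^0(A_{\bar{\p}})$ is itself a quasi-isomorphism of DG-rings, so that $A_{\bar{\p}}$ acquires the regularity of its zeroth cohomology.
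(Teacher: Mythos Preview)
Your proof is correct and follows essentially the same approach as the paper: both define the closed set $V$ as the union of the supports of the negative cohomology modules $\mrm{H}^i(A)$, observe that its complement is exactly the locus where $A_{\bar{\p}}\to\mrm{H}^0(A_{\bar{\p}})$ is a quasi-isomorphism, and conclude by intersecting with the open set $\opn{Reg}(\mrm{H}^0(A))$.
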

\begin{proof}
Assume $\amp(A) = n$,
and for each $0 < i \le n$,
let $V_i = \opn{Supp}(\mrm{H}^{-i}(A))$.
Let $V = \bigcup_{i=1}^n V_i$.
Since $n < \infty$, 
and since each $\mrm{H}^{-i}(A)$ is finitely generated over $\mrm{H}^0(A)$,
it follows that $V$ is a closed subset of $\opn{Spec}(\mrm{H}^0(A))$.
Let $W$ be the complement of $V$. 
Then $W$ is a an open set,
and for any $\bar{\p} \in W$ and any $i<0$,
we have that 
\[
\mrm{H}^i(A_{\bar{\p}}) \cong \mrm{H}^i(A)_{\bar{\p}} = 0.
\]
It follows that for any $\bar{\p} \in W$,
the natural map $A_{\bar{\p}} \to \mrm{H}^0(A_{\bar{\p}})$ is a quasi-isomorphism,
while for any $\bar{\p} \notin W$,
we have that $\amp(A_{\bar{\p}}) > 0$.
From this we may deduce that
\[
\opn{Reg}(A) = W \cap \opn{Reg}(\mrm{H}^0(A)),
\]
so being the intersection of two open sets, 
we deduce that $\opn{Reg}(A)$ is open.
\end{proof}

\begin{rem}
Since any regular local ring is Gorenstein,
it follows that if $A$ is a commutative noetherian DG-ring 
such that $\amp(A) < \infty$,
then $\opn{Reg}(A) \subseteq \opn{Gor}(A)$.
Hence, Proposition \ref{prop:GorEmpty} implies that both of these sets could be empty,
no matter how nice $\mrm{H}^0(A)$ is.
\end{rem}

\section{The Cohen-Macaulay locus}

We now wish to study the Cohen-Macaulay locus of a 
commutative noetherian DG-ring with bounded cohomology which has a dualizing DG-module.
To do this, we will need to give a global criteria for such a DG-ring to be Cohen-Macaulay.
Before we do that, let us recall the situation for noetherian rings.
The next result is well known so we omit its proof.

\begin{prop}\label{prop:cmGlobal}
Let $A$ be a commutative noetherian ring,
and let $R$ be a dualizing complex over $A$.
\begin{enumerate}
\item If $\amp(R) = 0$ then $A$ is a Cohen-Macaulay ring.
\item If $\opn{Spec}(A)$ is connected and $A$ is a Cohen-Macaulay ring then $\amp(R) = 0$.
\end{enumerate}
\end{prop}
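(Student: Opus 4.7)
The plan is to prove the two parts essentially independently, using the characterization of Cohen-Macaulayness in the local case via dualizing complexes (namely, for a local noetherian ring with dualizing complex $R$, we have that $A$ is Cohen-Macaulay if and only if $\amp(R)=0$; equivalently, $R$ is quasi-isomorphic to a shift of a module), together with the fact that localizations of dualizing complexes remain dualizing.

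For part (1), I would start by noting that $\amp(R)=0$ means $R\cong M[n]$ in $\cat{D}(A)$ for some $A$-module $M$ and integer $n$. For any $\bar{\p}\in\opn{Spec}(A)$, the localization $R_{\bar{\p}}\cong M_{\bar{\p}}[n]$ is a dualizing complex over $A_{\bar{\p}}$. The key point is that $M_{\bar{\p}}\neq 0$, because the support of a dualizing complex covers all of $\opn{Spec}(A)$. Hence $\amp(R_{\bar{\p}})=0$, and the local criterion yields that $A_{\bar{\p}}$ is Cohen-Macaulay. Since this holds for every prime, $A$ is Cohen-Macaulay.

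For part (2), the starting observation is the reverse: for each $\bar{\p}$, $R_{\bar{\p}}$ is a dualizing complex over the Cohen-Macaulay local ring $A_{\bar{\p}}$, so $\amp(R_{\bar{\p}})=0$. For each integer $i$ with $\inf(R)\le i\le \sup(R)$, let $V_i=\opn{Supp}\bigl(\mrm{H}^i(R)\bigr)$; these are closed subsets of $\opn{Spec}(A)$ since the cohomology modules are finitely generated, and their union is all of $\opn{Spec}(A)$ because $\opn{Supp}(R)=\opn{Spec}(A)$. The decisive point is that these sets are pairwise disjoint: if $\bar{\p}\in V_i\cap V_j$ with $i\neq j$, then $R_{\bar{\p}}$ would have at least two nonzero cohomology modules, contradicting $\amp(R_{\bar{\p}})=0$.

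Finally, I would use that only finitely many of the $V_i$ are nonempty (by boundedness of $R$), so the complement of each $V_i$ is a finite union of closed sets and hence closed. Thus each $V_i$ is clopen. The connectedness of $\opn{Spec}(A)$ then forces all but one of the $V_i$ to be empty, so $\amp(R)=0$. The main conceptual step is recognizing that the partition $\opn{Spec}(A)=\bigsqcup V_i$ arises from the local amplitude vanishing, and the rest is a topological argument; no step should present serious technical difficulty, which is consistent with the author's decision to omit the proof.
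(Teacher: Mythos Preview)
Your proposal is correct, and since the paper explicitly omits the proof of this proposition as well known, there is nothing to compare against. The argument you give---reducing to the local characterization $\amp(R_{\bar{\p}})=0 \iff A_{\bar{\p}}$ Cohen--Macaulay, and for part~(2) partitioning $\opn{Spec}(A)$ into the disjoint closed supports $V_i=\opn{Supp}(\mrm{H}^i(R))$ and invoking connectedness---is the standard one.
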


Recall that for a commutative ring $A$,
its spectrum $\opn{Spec}(A)$ is called irreducible if $A$ contains a unique minimal prime ideal.
Here is a DG-version of Proposition \ref{prop:cmGlobal}.

\begin{thm}\label{thm:cmGlobal}
Let $A$ be a commutative noetherian DG-ring. 
Suppose that $n = \amp(A) < \infty$,
and that $A$ has a dualizing DG-module $R$.
\begin{enumerate}
\item Assume that $\amp(A) = \amp(R)$ and that one of the following holds:
\begin{enumerate}
\item $\opn{Spec}(\mrm{H}^0(A))$ is irreducible.
\item $\opn{Supp}(\mrm{H}^{-n}(A)) = \opn{Spec}(\mrm{H}^0(A))$.
\end{enumerate}
Then $A$ is a Cohen-Macaulay DG-ring.
\item Assume that $\opn{Spec}(\mrm{H}^0(A))$ is equidimensional and $A$ is a Cohen-Macaulay DG-ring.
Then $\amp(A) = \amp(R)$.
\end{enumerate}
\end{thm}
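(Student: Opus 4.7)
The plan is to translate everything into local amplitudes and exploit the chain
\[
\amp(A_{\bar{\p}}) \le \amp(R_{\bar{\p}}) \le \amp(R),
\]
where the first inequality is \cite[Theorem 4.1]{ShCM} applied to the local dualizing DG-module $R_{\bar{\p}}$ (which is dualizing over $A_{\bar{\p}}$ by \cite[Corollary 6.11]{Sh}), and the second is immediate from $\sup(R_{\bar{\p}}) \le \sup(R)$ and $\inf(R_{\bar{\p}}) \ge \inf(R)$. Non-positivity of $A$ with $\mrm{H}^0(A) \ne 0$ gives $\sup(A) = 0 = \sup(A_{\bar{\p}})$ for every prime $\bar{\p}$, so $\amp(A_{\bar{\p}}) = -\inf(A_{\bar{\p}})$ is detected by whether $\bar{\p}$ belongs to $\opn{Supp}(\mrm{H}^{-n}(A))$.

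Case (1)(b) is then essentially immediate: the support hypothesis forces $\amp(A_{\bar{\p}}) = n$ for every $\bar{\p}$, so the chain $n = \amp(A_{\bar{\p}}) \le \amp(R_{\bar{\p}}) \le \amp(R) = n$ collapses, making $A_{\bar{\p}}$ local-Cohen-Macaulay. For case (1)(a) my plan is to reduce to (b) by showing $\opn{Supp}(\mrm{H}^{-n}(A)) = \opn{Spec}(\mrm{H}^0(A))$; since this support is a nonempty closed subset of an irreducible space, the reduction amounts to verifying that the unique generic point $\bar{\eta}$ belongs to it, equivalently $\mrm{H}^{-n}(A_{\bar{\eta}}) \ne 0$. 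The generic localization satisfies $\dim(\mrm{H}^0(A_{\bar{\eta}})) = 0$, so by the regular-sequence criterion from the preliminaries (the empty sequence has length $0 = \dim$) $A_{\bar{\eta}}$ is automatically local-Cohen-Macaulay and $\amp(A_{\bar{\eta}}) = \amp(R_{\bar{\eta}})$. The main obstacle is then forcing this common value up to the full $n$; I would do this via the duality $A \iso \mrm{R}\opn{Hom}_A(R,R)$ combined with irreducibility, which turns any nonempty closed subset of $\opn{Spec}(\mrm{H}^0(A))$ automatically into the whole space and lets the global hypothesis $\amp(R) = n$ propagate to the generic point.

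For part (2) the inequality $\amp(A) \le \amp(R)$ is free. The Cohen-Macaulay hypothesis gives $\amp(R_{\bar{\p}}) = \amp(A_{\bar{\p}}) \le n$ at every prime, so $\amp(R) \le n$ is equivalent to the existence of a single prime $\bar{\p}^*$ simultaneously realizing $\sup(R_{\bar{\p}^*}) = \sup(R)$ and $\inf(R_{\bar{\p}^*}) = \inf(R)$; in turn this amounts to
\[
\opn{Supp}\bigl(\mrm{H}^{\sup(R)}(R)\bigr) \cap \opn{Supp}\bigl(\mrm{H}^{\inf(R)}(R)\bigr) \ne \emptyset.
\]
The equidimensional hypothesis is there precisely to guarantee this nonempty intersection: under Cohen-Macaulay-ness the extremal cohomologies of $R$ behave like top and bottom canonical modules, and the uniform-coheight assumption forces each of their supports to meet every irreducible component of $\opn{Spec}(\mrm{H}^0(A))$, so the two supports meet. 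The main obstacle is to formalize this ``faithful support'' behavior of the extremal cohomologies of the dualizing DG-module in the DG setting, since classically the dualizing complex of a Cohen-Macaulay ring is concentrated in a single degree at every prime and the issue does not arise.
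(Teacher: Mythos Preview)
Your treatment of (1)(b) is correct and matches the paper exactly: the support hypothesis forces $\amp(A_{\bar{\p}})=n$ for every $\bar{\p}$, and the chain collapses.

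For (1)(a), however, there is a genuine gap. Your reduction to (b) hinges on showing that the generic point $\bar{\eta}$ lies in $\opn{Supp}(\mrm{H}^{-n}(A))$, equivalently that $\amp(A_{\bar{\eta}})=n$. You correctly observe that $A_{\bar{\eta}}$ is automatically local-Cohen-Macaulay (dimension zero), so $\amp(A_{\bar{\eta}})=\amp(R_{\bar{\eta}})$; but you then assert that irreducibility ``turns any nonempty closed subset of $\opn{Spec}(\mrm{H}^0(A))$ automatically into the whole space'' and that duality lets the hypothesis $\amp(R)=n$ ``propagate to the generic point''. The first claim is simply false (irreducible spaces have plenty of proper nonempty closed subsets), and the second is unsubstantiated: there is no reason a priori that both $\mrm{H}^{\inf(R)}(R)$ and $\mrm{H}^{\sup(R)}(R)$ survive localization at $\bar{\eta}$. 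Indeed, already for ordinary non-Cohen-Macaulay domains the dualizing complex has positive amplitude globally but amplitude zero at the generic point. The paper avoids this trap by working at the \emph{opposite} end of the spectrum: it picks a \emph{maximal} ideal $\bar{\m}\in\opn{Supp}(\mrm{H}^{-n}(A))$, runs your amplitude chain there to get $A_{\bar{\m}}$ local-Cohen-Macaulay, and then invokes \cite[Proposition~4.11]{ShCM} to obtain $\dim(\mrm{H}^{-n}(A_{\bar{\m}}))=\dim(\mrm{H}^0(A_{\bar{\m}}))$. Irreducibility is used only at this point, to conclude that a finitely generated module whose support has full dimension must have full support.

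For (2) you have identified the right target: one needs a prime $\bar{\p}^*$ lying in both $\opn{Supp}(\mrm{H}^{\inf(R)}(R))$ and $\opn{Supp}(\mrm{H}^{\sup(R)}(R))$. What you are missing is the mechanism that produces it. The paper does not argue that each extremal support meets every component; instead it shows the stronger fact that $\opn{Supp}(\mrm{H}^{\inf(R)}(R))=\opn{Spec}(\mrm{H}^0(A))$, so \emph{any} prime in $\opn{Supp}(\mrm{H}^{\sup(R)}(R))$ works. The key move is to pass to the ordinary dualizing complex $S=\mrm{R}\opn{Hom}_A(\mrm{H}^0(A),R)$ over $\mrm{H}^0(A)$: by \cite[Proposition~3.3]{ShInj} one has $\inf(S)=\inf(R)$ and $\mrm{H}^{\inf(S)}(S)\cong\mrm{H}^{\inf(R)}(R)$, and then the classical fact \cite[Tag~0AWK]{SP} for equidimensional noetherian rings gives $\opn{Supp}(\mrm{H}^{\inf(S)}(S))=\opn{Spec}(\mrm{H}^0(A))$. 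This is precisely the ``faithful support'' statement you flagged as the main obstacle.
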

\begin{proof}
\leavevmode
\begin{enumerate}[wide, labelwidth=!, labelindent=0pt]
\item  Let us first assume that $\opn{Spec}(\mrm{H}^0(A))$ is irreducible.
Let $\bar{\m}$ be some maximal ideal of $\opn{Spec}(\mrm{H}^0(A))$,
such that $\bar{\m} \in \opn{Supp}(\mrm{H}^{-n}(A))$.
Then 
\[
\amp(A_{\bar{\m}}) = \amp(A) = \amp(R) \ge \amp(R_{\bar{\m}}).
\]
Since by \cite[Theorem 4.1]{ShCM},
the opposite inequality $\amp(A_{\bar{\m}}) \le \amp(R_{\bar{\m}})$ always holds,
we deduce that $\amp(A_{\bar{\m}}) = \amp(R_{\bar{\m}})$,
so that $A_{\bar{\m}}$ is local-Cohen-Macaulay.
By \cite[Proposition 4.11]{ShCM},
this implies that
\[
\dim(\mrm{H}^{-n}(A_{\bar{\m}})) = \dim(\mrm{H}^0(A_{\bar{\m}})).
\]
Since $\opn{Spec}(\mrm{H}^0(A))$ is irreducible,
we have that
\[
\dim(\mrm{H}^0(A_{\bar{\m}})) = \dim(\mrm{H}^0(A)_{\bar{\m}}) = \dim(\mrm{H}^0(A)).
\]
We deduce that $\dim(\mrm{H}^{-n}(A)) = \dim(\mrm{H}^0(A))$.
The fact that $\opn{Spec}(\mrm{H}^0(A))$ is irreducible then implies that
$\opn{Supp}(\mrm{H}^{-n}(A)) = \opn{Spec}(\mrm{H}^0(A))$.
Under this assumption, as above,
for any $\bar{\p} \in \opn{Spec}(\mrm{H}^0(A))$,
we have that
\[
\amp(A_{\bar{\p}}) = \amp(A) = \amp(R) \ge \amp(R_{\bar{\p}}),
\]
so that that $\amp(A_{\bar{\p}}) = \amp(R_{\bar{\p}})$.
Hence, $A_{\bar{\p}}$ is local-Cohen-Macaulay,
so that $A$ is Cohen-Macaulay.
\item Set $a = \inf(R), b = \sup(R)$.
Let $S = \mrm{R}\opn{Hom}_A(\mrm{H}^0(A),R)$.
By \cite[Proposition 7.5]{Ye1}, $S$ is a dualizing complex over $\mrm{H}^0(A)$,
while by \cite[Proposition 3.3]{ShInj}, 
we have that $\inf(S) = a$, and $\mrm{H}^a(S) \cong \mrm{H}^a(R)$.
Since $\opn{Spec}(\mrm{H}^0(A))$ is equidimensional,
by \cite[Tag 0AWK]{SP}, we have that
\[
\opn{Supp}(\mrm{H}^a(S)) = \opn{Spec}(\mrm{H}^0(A)),
\]
so that
\[
\opn{Supp}(\mrm{H}^a(R)) = \opn{Spec}(\mrm{H}^0(A)).
\]
Take some $\bar{\p} \in \opn{Supp}(\mrm{H}^b(R))$.
Then $\amp(R_{\bar{\p}}) = \amp(R)$.
But $A_{\bar{\p}}$ is local-Cohen-Macaulay, 
so that
\[
\amp(A_{\bar{\p}}) = \amp(R_{\bar{\p}}).
\]
We deduce that
\[
\amp(A) \ge \amp(A_{\bar{\p}}) = \amp(R_{\bar{\p}}) = \amp(R).
\]
Since by \cite[Theorem 4.1]{ShCM} we have that $\amp(A) \le \amp(R)$,
we deduce that $\amp(A) = \amp(R)$.
\end{enumerate}
\end{proof}

Because the above result provides a global characterization for being 
Cohen-Macaulay only in the case where the spectrum is irreducible,
to use it to show that the Cohen-Macaulay property holds generically,
we first show that the property of being irreducible holds generically for noetherian rings.

\begin{prop}\label{prop:genIrred}
Let $A$ be a commutative noetherian ring.
There there exists a dense open set $W \subseteq \opn{Spec}(A)$,
such that $W$ has an open cover $W = \bigcup_{i=1}^m D(f_i)$,
with the property that for each $1 \le i \le m$,
the localization $A_{f_i}$ has an irreducible spectrum 
(that is, it has a minimal prime ideal).
In particular, for any $\p \in W$,
the ring $A_{\p}$ has an irreducible spectrum.
\end{prop}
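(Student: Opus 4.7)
The plan is to exploit the finiteness of the set of minimal primes in a noetherian ring together with prime avoidance. Let $\p_1, \ldots, \p_m$ be the (finitely many) minimal primes of $A$. Since every prime contains some $\p_i$, one has $\opn{Spec}(A) = \bigcup_{i=1}^m V(\p_i)$, so these are exactly the irreducible components. The strategy is to separate each component from the others using a principal open set, and take $W$ to be the union of these.

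For each $i$, minimality of the $\p_j$ gives $\p_i \not\supseteq \p_j$ for $j \neq i$. Prime avoidance then implies that $\p_i$ does not contain $\bigcap_{j \neq i} \p_j$, so one can choose
\[
f_i \in \bigcap_{j \neq i} \p_j \setminus \p_i.
\]
Set $W = \bigcup_{i=1}^m D(f_i)$. The key observation is that the minimal primes of $A_{f_i}$ correspond bijectively to the minimal primes of $A$ that do not contain $f_i$ (any prime of $A_{f_i}$ pulled back to $A$ must sit over a minimal prime with the same property). By construction, $f_i \in \p_j$ for $j \neq i$, leaving only $\p_i$. Hence $A_{f_i}$ has the unique minimal prime $\p_i A_{f_i}$, so $\opn{Spec}(A_{f_i})$ is irreducible. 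Moreover $\p_i \in D(f_i) \subseteq W$ for each $i$, so $\overline{W} \supseteq V(\p_i)$ for every $i$, and therefore $W$ is dense.

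For the final assertion, any $\p \in W$ lies in $D(f_i)$ for some $i$, and $A_{\p}$ is then a further localization of $A_{f_i}$. Since the unique minimal prime $\p_i A_{f_i}$ of $A_{f_i}$ is contained in every prime, its image is a minimal prime of $A_{\p}$; any minimal prime of $A_{\p}$ contracts to a minimal prime of $A_{f_i}$, so there is only one. Thus $\opn{Spec}(A_\p)$ is irreducible.

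There is no serious obstacle in this argument; it rests on two standard ingredients (finiteness of the minimal primes in a noetherian ring and prime avoidance), together with the verification that inverting $f_i$ kills every minimal prime except $\p_i$. The only point requiring care is the density of $W$, and this is handled immediately by the fact that each generic point $\p_i$ is visible in $D(f_i)$.
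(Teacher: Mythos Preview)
Your proof is correct and follows essentially the same approach as the paper: both arguments use the intersection $\q_i = \bigcap_{j\neq i}\p_j$ and the fact that $\q_i \not\subseteq \p_i$ to separate the $i$th irreducible component from the others, then observe that inverting an element of $\q_i\setminus\p_i$ leaves $\p_i$ as the only surviving minimal prime. The paper works with the full open sets $W_i = \opn{Spec}(\mrm{H}^0(A))\setminus V(\q_i)$ and then covers each by principal opens, while you directly choose a single $f_i\in\q_i\setminus\p_i$; your $W$ may be smaller than the paper's, but it is still dense and the statement only asks for existence. One terminological quibble: the step ``$\p_i \not\supseteq \p_j$ for all $j\neq i$ implies $\p_i \not\supseteq \bigcap_{j\neq i}\p_j$'' is not prime avoidance but rather the elementary fact that a prime containing a finite intersection of ideals contains one of them; the conclusion is of course correct.
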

\begin{proof}
Since $A$ is noetherian, it has only finitely many minimal prime ideals.
Let us denote the minimal primes by $\p_1,\dots,\p_n$.
For each $1 \le i \le n$,
let $\q_i$ be the intersection of all minimal primes except $\p_i$.
That is,
\[
\q_i = \p_1 \cap \dots \cap \p_{i-1} \cap \p_{i+1} \cap \dots \cap \p_n.
\]
Observe that minimality implies that $\q_i \nsubseteq \p_i$.
Let 
\[
V_i = V(\q_i) = \{\p \in \opn{Spec}(A) \mid \q_i \subseteq \p\},
\]
and let $W_i$ be the complement of $V_i$.
Then $W_i$ is an open set and $\p_i \in W_i$.
Since any prime ideal contains at least one minimal prime,
it follows that for $i \ne j$, $V_i \cup V_j = \opn{Spec}(A)$, 
so that $W_i \cap W_j = \emptyset$.
Let $W = \bigcup_{i=1}^n W_i$. 
This is an open subset of $\opn{Spec}(A)$, 
and since it contains all minimal prime ideals, it is dense.
For any $1 \le i \le n$, 
and for any $f \in A$,
if $D(f) \subseteq W_i$,
then the ring $A_f$ contains only one minimal prime ideal,
namely $\p_i$. 
Thus, the spectrum of $A_f$ is irreducible, 
so the result follows.
\end{proof}

Here is the main result of this section.

\begin{thm}
Let $A$ be a commutative noetherian DG-ring. 
Assume that $\amp(A) < \infty$,
and that $A$ has a dualizing DG-module.
Then $A$ is generically Cohen-Macaulay:
there exists a dense open set $W \subseteq \opn{Spec}(\mrm{H}^0(A))$,
such that for any prime ideal $\bar{\p} \in W$,
the DG-ring $A_{\bar{\p}}$ is Cohen-Macaulay.
Hence, the set
\[
\opn{CM}(A) = \{\bar{\p} \in \opn{Spec}(\mrm{H}^0(A)) \mid A_{\bar{\p}} \mbox{ is a Cohen-Macaulay DG-ring}\}
\]
contains a dense open set.
\end{thm}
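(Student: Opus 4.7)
The plan is to reduce to the irreducible case via Proposition \ref{prop:genIrred}, and then exploit the fact that the localization of $A$ at a minimal prime is automatically local-Cohen-Macaulay. Once we have that, we spread the equality of amplitudes $\amp(A_{\bar{\mfrak{q}}}) = \amp(R_{\bar{\mfrak{q}}})$ to a small enough neighborhood of $\bar{\mfrak{q}}$ and invoke Theorem \ref{thm:cmGlobal}(1)(a).

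First I would apply Proposition \ref{prop:genIrred} to produce a dense open cover $W_0 = \bigcup_{i=1}^{m} D(\bar{f}_i)$ of $\opn{Spec}(\mrm{H}^0(A))$ such that each $\mrm{H}^0(A)_{\bar{f}_i} = \mrm{H}^0(A_{\bar{f}_i})$ has irreducible spectrum. By \cite[Corollary 6.11]{Sh}, each $A_{\bar{f}_i}$ is again a commutative noetherian DG-ring with bounded cohomology which has a dualizing DG-module, namely $R_{\bar{f}_i}$. Since the property of being in $\opn{CM}(A)$ is preserved under further localization of $\mrm{H}^0(A)$, it suffices to exhibit, for each $i$, a dense open subset of $\opn{Spec}(\mrm{H}^0(A_{\bar{f}_i}))$ contained in $\opn{CM}(A_{\bar{f}_i})$; the union of the preimages then provides a dense open set contained in $\opn{CM}(A)$. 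Hence we may assume $\opn{Spec}(\mrm{H}^0(A))$ is irreducible with unique minimal prime $\bar{\mfrak{q}}$.

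Next, since $\dim(\mrm{H}^0(A_{\bar{\mfrak{q}}})) = 0$, the empty sequence is an $A_{\bar{\mfrak{q}}}$-regular sequence of length $\dim(\mrm{H}^0(A_{\bar{\mfrak{q}}}))$, so the characterization of the local-Cohen-Macaulay property recalled in the preliminaries gives that $A_{\bar{\mfrak{q}}}$ is local-Cohen-Macaulay. Setting $m = \amp(A_{\bar{\mfrak{q}}})$, we therefore have $\amp(A_{\bar{\mfrak{q}}}) = \amp(R_{\bar{\mfrak{q}}}) = m$. I would now spread this equality: since each $\mrm{H}^{i}(A)$ and $\mrm{H}^{j}(R)$ is a finitely generated $\mrm{H}^0(A)$-module, and only finitely many are nonzero, there exists $\bar{f} \in \mrm{H}^0(A) \setminus \bar{\mfrak{q}}$ which simultaneously annihilates $\mrm{H}^{-i}(A)$ for every $i>m$ and $\mrm{H}^{j}(R)$ for every $j$ outside the interval $[\inf(R_{\bar{\mfrak{q}}}), \sup(R_{\bar{\mfrak{q}}})]$. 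Combined with the fact that amplitude cannot increase under localization, and that the nonzero cohomologies at the extremal degrees at $\bar{\mfrak{q}}$ remain nonzero after localizing at $\bar{f}$ (since $\bar{f} \notin \bar{\mfrak{q}}$), this gives $\amp(A_{\bar{f}}) = \amp(R_{\bar{f}}) = m$.

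Finally, $\opn{Spec}(\mrm{H}^0(A_{\bar{f}}))$ is a non-empty open subset of the irreducible spectrum, hence irreducible, and $R_{\bar{f}}$ is a dualizing DG-module over $A_{\bar{f}}$ by \cite[Corollary 6.11]{Sh}. Theorem \ref{thm:cmGlobal}(1)(a) then yields that $A_{\bar{f}}$ is a Cohen-Macaulay DG-ring, so $D(\bar{f}) \subseteq \opn{CM}(A)$. The open set $D(\bar{f})$ contains the unique minimal prime $\bar{\mfrak{q}}$ and is therefore dense in $\opn{Spec}(\mrm{H}^0(A))$, concluding the irreducible case and, after unioning over $i$, the theorem. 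The one delicate point is the simultaneous control of both $\amp(A_{\bar{f}})$ and $\amp(R_{\bar{f}})$ after localization, which is the main step where the noetherian hypothesis and finite generation of cohomology are essential; the rest is bookkeeping.
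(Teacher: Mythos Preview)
Your proposal is correct and follows essentially the same route as the paper: reduce to the irreducible case via Proposition~\ref{prop:genIrred}, use that the localization at the unique minimal prime is automatically local-Cohen-Macaulay, and then spread the amplitude equality to an open neighborhood where Theorem~\ref{thm:cmGlobal}(1)(a) applies. The only difference is cosmetic: the paper removes only the excess cohomology of $R$ (using that $\opn{Supp}(\mrm{H}^{\inf(R)}(R))=\opn{Spec}(\mrm{H}^0(A))$) and then verifies $\amp(A_{\bar{\q}})=\amp(R_{\bar{\q}})$ pointwise for each $\bar{\q}$ in the resulting open set before invoking Theorem~\ref{thm:cmGlobal} at $A_{\bar{\q}}$, whereas you kill the excess cohomology of both $A$ and $R$ by a single $\bar{f}$ and invoke Theorem~\ref{thm:cmGlobal} once at $A_{\bar{f}}$.
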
 
\begin{proof}
Letting $W = \bigcup_{i=1}^m D(f_i)$ be as in Proposition \ref{prop:genIrred},
it is enough to show that for each $1\le i \le m$,
the set $D(f_i) \cap \opn{CM}(A)$ contains a set which is open and dense in $D(f_i)$.
Replacing $A$ by the localization $(f_i)^{-1}A$, 
we may thus assume without loss of generality that $\opn{Spec}(\mrm{H}^0(A))$ is irreducible.
Let $a = \inf(R), b = \sup(R)$.
By the proof of Theorem \ref{thm:cmGlobal}(2),
we have that $\opn{Supp}(\mrm{H}^a(R)) = \opn{Spec}(\mrm{H}^0(A))$.
Let $\bar{\p}$ be the minimal prime ideal of $\opn{Spec}(\mrm{H}^0(A))$.
Since $\dim(\mrm{H}^0(A_{\bar{\p}})) = 0$,
by \cite[Proposition 4.8]{ShCM},
the DG-ring $A_{\bar{\p}}$ is local-Cohen-Macaulay,
so $\amp(A_{\bar{\p}}) = \amp(R_{\bar{\p}})$.
Note that $\inf(R_{\bar{\p}}) = a$,
and let $c = \sup(R_{\bar{\p}})$.
Then $c \le b$.
Let us set
\[
V = \bigcup_{i=c+1}^b \opn{Supp}(\mrm{H}^i(R)).
\]
This is a closed subset of $\opn{Spec}(\mrm{H}^0(A))$,
and $\bar{\p} \notin V$. 
If $U$ is the complement of $V$,
then $U$ is an open subset of $\opn{Spec}(\mrm{H}^0(A))$,
and since $\bar{\p} \in U$, 
we see that $U$ is dense.
Note that for any $\bar{\q} \in U$,
we have that
\[
\amp(R_{\bar{\q}}) \le \amp(R_{\bar{\p}}) = \amp(A_{\bar{\p}}).
\]
On the other hand, since $\bar{\p} \subseteq \bar{\q}$,
so that $A_{\bar{\p}}$ is a localization of $A_{\bar{\q}}$,
we deduce that $\amp(A_{\bar{\q}}) \ge \amp(A_{\bar{\p}})$.
Since by \cite[Theorem 4.1]{ShCM} we have that $\amp(A_{\bar{\q}}) \le \amp(R_{\bar{\q}})$,
it follows that $\amp(A_{\bar{\q}}) = \amp(R_{\bar{\q}})$.
Theorem \ref{thm:cmGlobal} now implies that $A_{\bar{\q}}$ is a Cohen-Macaulay DG-ring,
so that $\opn{CM}(A)$ contains a dense open set.
\end{proof}

\textbf{Acknowledgments.}

The author thanks Amnon Yekutieli for many useful conversations,
and for for encouraging me to prove Theorem \ref{thm:cmGlobal}.
The author is thankful to an anonymous referee for several suggestions that helped improving this manuscript.
This work has been supported by Charles University Research Centre program No.UNCE/SCI/022,
and by the grant GA~\v{C}R 20-02760Y from the Czech Science Foundation.


\begin{thebibliography}{99}

\bibitem{AF}
Avramov, L. L., \& Foxby, H. B. (1992). Locally Gorenstein homomorphisms. American Journal of Mathematics, 114(5), 1007-1047.

\bibitem{FIJ}
Frankild, A., Iyengar, S., \& J{\o}rgensen, P. (2003). Dualizing differential graded modules and Gorenstein differential graded algebras. Journal of the London Mathematical Society, 68(2), 288-306.

\bibitem{FJ}
Frankild, A., \& J{\o}rgensen, P. (2003). Gorenstein differential graded algebras. Israel Journal of Mathematics, 135(1), 327-353.

\bibitem{IT}
Iyengar, S. B., \& Takahashi, R. (2019). Openness of the regular locus and generators for module categories. Acta Mathematica Vietnamica, 44(1), 207-212.

\bibitem{J}
J{\o}rgensen, P. (2003). Recognizing dualizing complexes. Fundamenta Mathematicae, 176(3), 251-259.

\bibitem{Jo}
J{\o}rgensen, P. (2010). Amplitude inequalities for differential graded modules. Forum Mathematicum (Vol. 22, No. 5, pp. 941-948).

\bibitem{Mat}
Matsumura, H. (1980). Commutative Algebra, 2nd edn. Mathematics Lecture Note Series, vol. 56. Benjamin/Cummings Publishing Co., Inc., Reading.

\bibitem{Mi}
Minamoto, H. (2019). On commutative differential graded algebras. arXiv preprint arXiv:1903.07514v1.


\bibitem{Sh}
Shaul, L. (2017). The twisted inverse image pseudofunctor over commutative DG rings and perfect base change. Advances in Mathematics, 320, 279-328.

\bibitem{ShInj}
Shaul, L. (2018). Injective DG-modules over non-positive DG-rings. Journal of Algebra, 515, 102-156.

\bibitem{ShCM}
Shaul, L. (2020). The Cohen-Macaulay property in derived commutative algebra. Transactions of the American Mathematical Society, 373(9), 6095-6138.

\bibitem{SP}
The Stacks Project, J.A. de Jong (Editor), http://stacks.math.columbia.edu.

\bibitem{To}
To\"{e}n, Bertrand. (2014). Derived algebraic geometry. EMS Surveys in Mathematical Sciences, 1(2), 153-240.

\bibitem{Ye1}
Yekutieli, A. (2016). Duality and tilting for commutative DG rings. arXiv preprint arXiv:1312.6411v4.

\bibitem{YeBook}
Yekutieli, A. (2019). Derived Categories (Cambridge Studies in Advanced Mathematics). Cambridge: Cambridge University Press.


\end{thebibliography}
\end{document}